\newtheorem{thm}{Theorem}[section]
\newtheorem{prop}[thm]{Proposition}
\newtheorem{dfn}[thm]{Definition}
\newtheorem{clm}[thm]{Claim}
\newtheorem{lemma}[thm]{Lemma}
\newtheorem{cor}[thm]{Corollary}
\newcommand{\omg}{\omega_1}
\newcommand{\mad}{\mathcal{M}}
\newcommand{\slm}{S^\lambda_\mu}
\newcommand{\ter}{X[\lambda,\mu,\mad,\underline{C}]}
\author[D. Soukup]{D\'aniel Soukup}
\address{E\"otv\"os L\'or\'and University}
\email{daniel.t.soukup@gmail.com}
\title{Guessing clubs for \lowercase{a}D, non D-spaces}
\keywords{D-spaces, aD-spaces, club guessing}
\subjclass[2000]{54D20, 03E75}
\begin{document}
\maketitle

\begin{abstract} We prove that there exists a 0-dimensional, scattered $T_2$ space $X$ such that $X$ is aD but not linearly D, answering a question of Arhangel'skii. The constructions are based on Shelah's club guessing principles.
\end{abstract}

\section{Introduction}

The notion of a \emph{$D$-space} was probably first introduced by van Douwen and since than, many work had been done in this topic. Investigating the properties of $D$-spaces and the connections between other covering properties led to the definition of \emph{aD-spaces}, defined by Arhangel'skii in \cite{arhadd}. As it turned out, property aD is much more docile then property D. In \cite{arhcov} Arhangel'skii asked the following:\\

\textbf{Problem 4.6.} Is there a Tychonoff aD-space which is not a D-space?\\

A negative answer to this question would settle almost all of the questions about the relationship of classical covering properties to property D. Quite similarly, Guo and Junnila in \cite{linD} asked the following about a weakening of property D:\\

\textbf{Problem 2.12.} Is every aD-space linearly D?\\

In G. Gruenhage's survey on D-spaces \cite{gg}, another version of this question is stated (besides the original Arhangel'skii), namely:\\

\textbf{Question 3.6(2)} Is every scattered, aD-space a D-space?\\

The main result of this paper is the following answer to the questions above.

\begin{thm}\label{main} There exists a 0-dimensional $T_2$ space $X$ such that $X$ is scattered, aD and non linearly D.
\end{thm}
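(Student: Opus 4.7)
The plan is to build $X$ as the scattered space $\ter$ suggested by the notation. I would take $\lambda=\omg$ and $\mu=\omega$, so $\slm$ is the set of countable limit ordinals in $\omg$, and work with a stationary $S\subseteq\slm$. The underlying set of $X$ is essentially $\omg\cup S$ stratified into two Cantor-Bendixson levels: the bottom is $\omg$ (discrete), and the top consists of the points of $S$. The building data are a Shelah-style club-guessing sequence $\underline C=\langle C_\delta:\delta\in S\rangle$, with each $C_\delta\subseteq\delta$ an $\omega$-sequence cofinal in $\delta$, together with an almost disjoint family $\mad$ on $\omg$. Basic clopen neighborhoods of $\delta\in S$ are of the form $\{\delta\}\cup(C_\delta\setminus\alpha)\setminus\bigcup\mathcal A$ with $\alpha<\delta$ and $\mathcal A\in[\mad]^{<\omega}$, which makes $X$ automatically $T_2$, $0$-dimensional and scattered.

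To prove aD, take a closed $F\subseteq X$ and any open cover $\mathcal U$ of $X$. Points of $F\cap\omg$ are isolated in $X$, so only $F\cap S$ is delicate. For each $\delta\in F\cap S$ pick $U_\delta\in\mathcal U$ containing a basic neighborhood encoded by some $\alpha_\delta<\delta$ and $\mathcal A_\delta\in[\mad]^{<\omega}$. The map $\delta\mapsto\alpha_\delta$ is regressive, so Fodor's lemma together with the club-guessing property of $\underline C$ on $S$ should produce a closed discrete $D\subseteq F\cap S$ whose assigned neighborhoods, supplemented by the (closed discrete) remainder of $F\cap\omg$ left uncovered, cover $F$.

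To show that $X$ is not linearly D, I would construct a specific linearly ordered open neighborhood assignment $N$ by fixing an increasing chain $\langle U_\alpha:\alpha<\omg\rangle$ of open sets with $\bigcup_\alpha U_\alpha=X$ and setting $N(x)=U_{\alpha(x)}$ for the least $\alpha(x)$ with $x\in U_{\alpha(x)}$. The chain should be cooked up from $\mad$ so that for every closed discrete candidate $D\subseteq X$, the points of $D$ can absorb only boundedly many members of each element of $\mad$; the club-guessing property then yields some $\delta\in S$ with $C_\delta\setminus\bigcup N[D]$ still infinite, so at least one bottom-level point of $C_\delta$ is missed by $N[D]$.

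The main obstacle will be calibrating $\mad$ and $\underline C$ so that the two demands do not interfere: aD is a positive, compactness-like requirement asking for small closed discrete witnesses, while failure of linearly D is a negative diagonalisation asking that the chain $\langle U_\alpha\rangle$ remain too thin along any candidate kernel. Shelah's ZFC theorem on club guessing at $\slm$ is the engine that makes both tasks feasible simultaneously: the guessing clubs provide just enough uniformity over $S$ to extract the closed discrete witnesses needed for aD while still leaving enough variability to defeat every linearly ordered neighborhood assignment.
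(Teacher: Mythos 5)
There is a genuine gap, and it is fatal to the proposal as written: you take $\lambda=\omg$ and $\mu=\omega$ and invoke ``Shelah's ZFC theorem on club guessing at $\slm$'', but no such ZFC theorem exists for $S^{\omg}_\omega$. The two club-guessing results the construction can draw on require either $cf(\lambda)\geq\mu^{++}$ (Theorem \ref{shelah1}) or $\lambda=\mu^+$ with $\mu$ \emph{uncountable} regular (Theorem \ref{shelah2}); neither applies to $\omg$ and $\omega$, and indeed club guessing on $S^{\omg}_\omega$ is independent of ZFC (it follows from $\diamondsuit$ but can consistently fail). This is precisely why the $\omg$-sized version of the result is only an independence statement (see \cite{dsoukup}) and why the actual proof must move up to $\lambda=\omega_2$ or $\omega_3$ and split into cases according to the values of $2^\omega$ and $2^{\omg}$ (Corollaries \ref{pelda1} and \ref{pelda2}), choosing $\mu\in\{\omega,\omega_1\}$ and the MAD family accordingly. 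Without replacing your choice of parameters by one of these configurations, the ``engine'' of your argument is simply unavailable.

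A second, independent problem is the shape of the space itself. You put a single new point above each $\delta\in S$ with neighborhoods of the form $\{\delta\}\cup(C_\delta\setminus\alpha)\setminus\bigcup\mathcal A$. In that space the top level $S$ is an unbounded closed discrete set (bottom-level points are isolated and each $\delta\in S$ has a neighborhood meeting the top level only in $\{\delta\}$), so the monotone cover by initial segments \emph{does} admit a big closed discrete set and your non--linearly-D argument collapses. The paper avoids this by placing an entire column $X_\delta=\{\delta\}\times\kappa$, indexed by a MAD family on $\mu$ (not on $\lambda$), above each $\delta\in\slm$; Claim \ref{sorkomp} then forces every closed discrete subset to be bounded, which is the whole content of the failure of linear D --- no club guessing is needed there. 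Club guessing is needed only on the positive side, to make the space ``high'' so that closed unbounded subspaces are irreducible (Theorem \ref{fotetel}), and your Fodor-style sketch of aD does not engage with why the chosen neighborhoods cover all of $F$ rather than just a club's worth of points.
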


In \cite{dsoukup} the author showed that the existence of a locally countable, locally compact space $X$ of size $\omg$ which is aD and non linearly D is independent of ZFC. Here we refine those methods and using Shelah's club guessing theory we answer the above questions in ZFC.

The paper has the following structure. Sections \ref{defsec}, \ref{madsec} and \ref{guesssec} gather all the necessary facts about D-spaces, MAD families and club guessing. In Section \ref{consec} we define spaces $\ter$, where $\lambda$ and $\mu=cf(\mu)$ are cardinals, $\mad$ is a MAD family on $\mu$ and $\underline{C}$ is a guessing sequence. It is shown in Claim \ref{basic} that
 \begin{enumerate}[(0)]
 \item $\ter$ is always $T_2$, 0-dimensional and scattered.
  \end{enumerate}
  Section \ref{Dsec} contains two important results:
\begin{enumerate}[(1)]
  \item $\ter$ is not linearly D if $cf(\lambda)\geq\mu$ (see Corollary \ref{nlind}),
  \item $\ter$ is aD under certain assumptions (see Corollary \ref{fokov}).
\end{enumerate}
Finally in Section \ref{examp} we show how to produce such spaces $\ter$ depending on the cardinal arithmetic and using Shelah's club guessing.

The author would like to thank Assaf Rinot for his ideas and advices to look deeper into the theory of club guessing in  ZFC.

\section{Definitions}\label{defsec}

An \emph{open neighborhood assignment} (ONA, in short) on a space $(X,\tau)$ is a map $U:X\rightarrow\tau$ such that $x\in U(x)$ for every $x\in X$. X is said to be a \emph{D-space} if for every neighborhood assignment $U$, one can find a closed discrete $D\subseteq X$ such that $X=\bigcup_{d\in D}U(d)=\bigcup U[D]$ (such a set $D$ is called a \emph{kernel for $U$}). In \cite{arhadd} the authors introduced property \emph{aD}:

\begin{dfn}A space $(X,\tau)$ is said to be \emph{aD} iff for each closed $F\subseteq X$ and for each open cover $\mathcal{U}$ of $X$ there is a closed discrete $A\subseteq F$ and $\phi:A\rightarrow \mathcal{U}$ with $a\in\phi(a)$ for all $a\in A$ such that $F\subseteq \cup\phi[A]$.
\end{dfn}

It is clear that D-spaces are aD. Proving that a space is aD, the notion of an \emph{irreducible space} will play a key role. A space $X$ is \emph{irreducible} iff every open cover  $\mathcal{U}$ has a \emph{minimal open refinement} $\mathcal{U}_0$; meaning that no proper subfamily of $\mathcal{U}_0$ covers $X$. In \cite{arhcov} Arhangel'skii showed the following equivalence.

\begin{thm}[{\cite[Theorem 1.8]{arhcov}}]A $T_1$-space $X$ is an aD-space if and only if every closed subspace of $X$ is irreducible.
\end{thm}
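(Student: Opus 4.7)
The plan is to prove both directions by direct construction, in each case passing through the cover of the closed set restricted to $F$, and exploiting $T_1$ to convert a closed discrete set into a witness of minimality (and vice versa).

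For the ``if'' direction, given a closed $F\subseteq X$ and an open cover $\mathcal{U}$ of $X$, I would restrict to the relatively open cover $\{U\cap F:U\in\mathcal{U}\}$ of $F$ and apply irreducibility of the closed subspace $F$ to obtain a minimal refinement $\mathcal{V}_0$. Minimality yields, for each $V\in\mathcal{V}_0$, a witness point $a_V\in V\setminus\bigcup(\mathcal{V}_0\setminus\{V\})$. Setting $A=\{a_V:V\in\mathcal{V}_0\}$ and fixing $\phi(a_V)=U_V$ for some $U_V\in\mathcal{U}$ with $V\subseteq U_V\cap F$, the set $A$ is discrete in its subspace topology (since $V\cap A=\{a_V\}$), and it is closed in $X$ by $T_1$: given $x\in F\setminus A$, pick any $V\in\mathcal{V}_0$ with $x\in V$; then $V\setminus\{a_V\}$ is a neighborhood of $x$ disjoint from $A$, while $X\setminus F$ handles points outside $F$. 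The covering $F\subseteq\bigcup\phi[A]$ is immediate from $F=\bigcup\mathcal{V}_0$.

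For the ``only if'' direction, given a closed $F$ and a cover $\mathcal{V}$ of $F$ by relatively open sets, I would first extend each $V\in\mathcal{V}$ to $\tilde V$ open in $X$ with $\tilde V\cap F=V$ and form the open cover $\mathcal{U}=\{\tilde V:V\in\mathcal{V}\}\cup\{X\setminus F\}$ of $X$. The aD property applied to $F$ and $\mathcal{U}$ yields a closed discrete $A\subseteq F$ and $\phi:A\to\mathcal{U}$ with $F\subseteq\bigcup\phi[A]$; since $a\in F$ forces $\phi(a)\neq X\setminus F$, each $\phi(a)$ is some $\tilde V_a$. To manufacture a minimal refinement of $\mathcal{V}$, I would set $W_a=(\tilde V_a\cap F)\setminus(A\setminus\{a\})$: this is open in $F$ because $A\setminus\{a\}$ is closed (using $A$ closed in $X$ and $X$ being $T_1$), each $W_a$ sits inside some element of $\mathcal{V}$, the family $\{W_a:a\in A\}$ covers $F$, and it is minimal because each $a\in A$ belongs to $W_a$ and to no other $W_{a'}$.

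The only mildly delicate steps are the two uses of $T_1$: removing the single closed point $a_V$ from a relative neighborhood when proving $A$ is closed in the ``if'' direction, and peeling off the entire closed discrete $A\setminus\{a\}$ when shrinking $\tilde V_a\cap F$ down to $W_a$ in the ``only if'' direction. No cardinal arithmetic or further combinatorics is required.
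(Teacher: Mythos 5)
This statement is quoted in the paper directly from Arhangel'skii's article (Theorem 1.8 of the cited reference) and is given no proof there, so there is no internal argument to compare yours against. Your proof is correct and is the standard one: in the ``if'' direction the witness points of minimality form the closed discrete set $A$ (with $T_1$ used exactly where you flag it, to delete $a_V$ from a relative neighborhood of a point of $F\setminus A$), and in the ``only if'' direction puncturing each $\tilde V_a\cap F$ by $A\setminus\{a\}$ yields the minimal refinement --- note that $A\setminus\{a\}$ is closed already because $A$ is closed \emph{and discrete}, so that particular step does not really need $T_1$.
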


Another generalization of property D is due to Guo and Junnila \cite{linD}. For a space $X$ a cover $\mathcal{U}$ is \emph{monotone} iff it is linearly ordered by inclusion.

\begin{dfn}A space $(X,\tau)$ is said to be \emph{linearly D} iff for any ONA $U:X\rightarrow\tau$ for which $\{U(x):x\in X\}$ is monotone, one can find a closed discrete set $D\subseteq X$ such that $X=\bigcup U[D]$.
\end{dfn}

We will use the following characterization of linear D property. A set $D\subseteq X$ is said to be \emph{$\mathcal{U}$-big} for a cover  $\mathcal{U}$ iff there is no $U\in \mathcal{U}$ such that $D\subseteq U$.

\begin{thm}[{\cite[Theorem 2.2]{linD}}] \label{linD} The following are equivalent for a $T_1$-space X:
\begin{enumerate}
  \item X is linearly D.
  \item For every non-trivial monotone open cover $\mathcal{U}$ of $X$, there exists a closed discrete $\mathcal{U}$-big set in $X$.
\end{enumerate}
\end{thm}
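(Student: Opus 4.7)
The statement is an equivalence, and the natural plan is to prove both directions by converting between ONAs and monotone covers.

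For $(1)\Rightarrow(2)$, given a non-trivial monotone open cover $\mathcal{U}$ of a linearly-D space $X$, I would define an ONA $U$ with range in $\mathcal{U}$ so that any linear-D kernel is automatically $\mathcal{U}$-big. The cleanest route is to select $U(x)\in\mathcal{U}$ as $\subseteq$-small as possible among members containing $x$; when $\mathcal{U}$ has a minimum at every point this is direct, and otherwise one first passes to a cofinal well-founded refinement of $\mathcal{U}$. Any closed discrete $D$ with $\bigcup U[D]=X$ then cannot lie inside a single $V\in\mathcal{U}$: minimality forces $U(d)\subseteq V$ for each $d\in D$, giving $X\subseteq V$ and contradicting non-triviality of $\mathcal{U}$.

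For $(2)\Rightarrow(1)$, given an ONA $U$ whose range $\mathcal{U}_0=\{U(x):x\in X\}$ is monotone, assume $X\notin\mathcal{U}_0$ (otherwise a singleton kernel suffices). I would build the kernel by transfinite recursion. Suppose $D_\alpha$ is closed discrete and $V_\alpha=\bigcup U[D_\alpha]\neq X$. The key observation, by monotonicity, is that any $x\in X\setminus V_\alpha$ satisfies $U(x)\supseteq V_\alpha$---otherwise $U(x)\subseteq U(d)$ for some $d\in D_\alpha$, forcing $x\in V_\alpha$. Hence $\mathcal{U}_\alpha=\{U(x):x\in X\setminus V_\alpha\}$ is still a non-trivial monotone open cover of $X$. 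Apply (2) to obtain a closed discrete $\mathcal{U}_\alpha$-big set, intersect with $X\setminus V_\alpha$---this intersection remains $\mathcal{U}_\alpha$-big because every member of $\mathcal{U}_\alpha$ contains $V_\alpha$---and adjoin to $D_\alpha$. Take unions at limit stages. By cardinality the recursion halts with $V_\alpha=X$.

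The main obstacle is preserving closed discreteness across the recursion. Monotonicity saves us once more: all new points $D_\beta\setminus D_\alpha$ with $\beta>\alpha$ lie outside $V_\alpha$, while each $d\in D_\alpha$ is covered by an open set $U(d)\subseteq V_\alpha$ meeting no later additions. Using $T_1$ to shrink isolating neighborhoods of earlier points so as to miss the previously chosen neighbors, each $p\in X$ obtains a neighborhood inside $V_{\alpha(p)}$ (where $\alpha(p)$ is the first stage at which $p\in V_\alpha$) that meets $D=\bigcup_\alpha D_\alpha$ only where it already met the closed discrete $D_{\alpha(p)}$. This delivers both local isolation and closure of $D$. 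A subtler issue also lurks in $(1)\Rightarrow(2)$: when $\mathcal{U}$ has no minimal element over some point, the refinement step needed to make the minimal choice legitimate deserves care, and this is where I expect the writeup to be most delicate.
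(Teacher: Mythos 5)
You should first note that the paper does not prove this statement at all: it is imported verbatim from Guo and Junnila (\cite[Theorem 2.2]{linD}) and used as a black box, so your argument has to stand on its own. Your $(1)\Rightarrow(2)$ does stand: the delicate point you flag is handled exactly as you suggest, by passing to a $\subseteq$-cofinal subfamily $\mathcal{U}_0\subseteq\mathcal{U}$ of minimal order type (hence well-ordered by $\subseteq$, still a non-trivial monotone cover, and such that $\mathcal{U}_0$-big implies $\mathcal{U}$-big), and then letting $U(x)$ be the $\subseteq$-least member of $\mathcal{U}_0$ containing $x$.

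The gap is in $(2)\Rightarrow(1)$. Your closed-discreteness argument for $D=\bigcup_\alpha D_\alpha$ does not close at limit stages: to control $U(d)\cap D$ for a point $p$ first covered at stage $\beta+1$ you fall back on $D_{\beta+1}=D_\beta\cup E_\beta'$ being closed discrete, and when $\beta$ is a limit and $p\notin V_\beta$ your recipe only gives $p$ a neighbourhood inside $V_{\alpha(p)}$ with $\alpha(p)>\beta$, which is circular; moreover ``using $T_1$ to shrink isolating neighborhoods'' can only discard finitely many points, not a set of earlier-stage points accumulating at $p$. The good news is that your own key observation makes the entire recursion unnecessary, because the first step already terminates it. Let $\mathcal{U}=\{U(x):x\in X\}$; if $X\in\mathcal{U}$ a singleton is a kernel, and otherwise apply (2) once to get a closed discrete $\mathcal{U}$-big set $D$. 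For any $y\in X$ there is $d\in D\setminus U(y)$; since $U(d)$ and $U(y)$ are $\subseteq$-comparable and $U(d)\subseteq U(y)$ would force $d\in U(y)$, we get $U(y)\subseteq U(d)$ and hence $y\in U(d)$. Thus $\bigcup U[D]=X$ and $D$ is itself a kernel --- no transfinite recursion, no limit stages, and the obstacle you spend the most effort on never arises. If you do keep the recursive formulation, the same computation shows $V_1=X$, so you should prove that rather than appeal to ``halting by cardinality.''
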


\section{Notes on MAD families}\label{madsec}

As MAD families will play an essential part in our constructions we observe some easy facts about them. Let $\mu$ be any infinite cardinal. We call $\mad\subseteq [\mu]^\mu$ an \emph{almost disjoint family} if $|M\cap N|<\mu$ for all distinct $M,N\in\mad$. $\mad$ is a \emph{maximal almost disjoint family} (in short, a \emph{MAD family}) if for all $A\in[\mu]^\mu$ there is some $M\in\mad$ such that $|A\cap M|=\mu$. 

We will use the following rather trivial combinatorial fact.

\begin{clm}\label{madmetsz}
 Let $\mad\subseteq [\mu]^\mu$ be a MAD family and $\mad=\{M^\varphi:\varphi<\kappa\}$. Suppose that $N\in [\mu]^\mu$ and $|N\setminus \cup\mad'|=\mu$ for all $\mad'\in[\mad]^{<\mu}$. Then $|\Phi|>\mu$ for $\Phi=\{\varphi<\kappa: |N\cap M^\varphi|=\mu\}$.
\end{clm}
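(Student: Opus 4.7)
The plan is to argue by contradiction: assume $|\Phi|\leq\mu$ and build a set $A\subseteq N$ with $|A|=\mu$ that is almost disjoint (in the sense of having $<\mu$ intersection) from $M^\varphi$ for every $\varphi\in\Phi$. Such an $A$ will contradict the maximality of $\mathcal{M}$: by MAD-ness there must exist some $M\in\mathcal{M}$ with $|A\cap M|=\mu$, and since $A\subseteq N$ this forces $|N\cap M|=\mu$, i.e. $M=M^\varphi$ for some $\varphi\in\Phi$, contradicting $|A\cap M^\varphi|<\mu$.

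To construct $A$ I would split on the size of $\Phi$. If $|\Phi|<\mu$, then $\{M^\varphi:\varphi\in\Phi\}\in[\mathcal{M}]^{<\mu}$, so by hypothesis on $N$ the set $A:=N\setminus\bigcup_{\varphi\in\Phi}M^\varphi$ already has cardinality $\mu$ and is literally disjoint from each $M^\varphi$, $\varphi\in\Phi$; done. The substantive case is $|\Phi|=\mu$, where the hypothesis cannot be applied to the whole of $\Phi$ at once.

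For $|\Phi|=\mu$, enumerate $\Phi=\{\varphi_\alpha:\alpha<\mu\}$ and define recursively, for $\alpha<\mu$,
\[
N_\alpha := N\setminus\bigcup_{\beta<\alpha}M^{\varphi_\beta}.
\]
Since $\{M^{\varphi_\beta}:\beta<\alpha\}$ has size $|\alpha|<\mu$, the hypothesis gives $|N_\alpha|=\mu$. Using regularity of $\mu$, pick $a_\alpha\in N_\alpha\setminus\{a_\beta:\beta<\alpha\}$ (possible as $|\{a_\beta:\beta<\alpha\}|<\mu$). Set $A=\{a_\alpha:\alpha<\mu\}\subseteq N$. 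The key observation is that for any fixed $\alpha<\mu$ and any $\beta>\alpha$, we have $a_\beta\in N_\beta\subseteq \mu\setminus M^{\varphi_\alpha}$, so
\[
A\cap M^{\varphi_\alpha}\subseteq\{a_\beta:\beta\leq\alpha\},
\]
giving $|A\cap M^{\varphi_\alpha}|\leq|\alpha|+1<\mu$, as desired. Then the MAD-argument of the first paragraph applies.

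The only mildly delicate point is the diagonal construction in the case $|\Phi|=\mu$; everything else is a direct application of the hypothesis and the maximality of $\mathcal{M}$. The regularity of $\mu$ (which the paper assumes in the context in which this claim is used) is what makes both the bookkeeping of already chosen $a_\beta$'s and the final conclusion $|\alpha|+1<\mu$ go through.
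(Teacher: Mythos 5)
Your proof is correct and follows essentially the same route as the paper: the case $|\Phi|<\mu$ is handled by a single application of the hypothesis, and the case $|\Phi|=\mu$ by the same transfinite diagonal construction of a set $A\subseteq N$ of size $\mu$ almost disjoint from every $M^{\varphi}$, $\varphi\in\Phi$, contradicting maximality. (A minor remark: the step you attribute to regularity of $\mu$ only needs $|\alpha|<\mu=|N_\alpha|$, so the argument works for any infinite $\mu$, just as in the paper.)
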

\begin{proof}
 If $|\Phi|<\mu$ then with $\widetilde{N}=N\setminus\bigcup\{M^\varphi:\varphi\in\Phi\}\in [\mu]^\mu$ we can extend the MAD family, which is a contradiction. If $|\Phi|= \mu$ then let $\Phi=\{\varphi_\zeta:\zeta<\mu\}$. By transfinite induction, construct $\widetilde{N}=\{n_\xi:\xi<\mu\}$ such that $n_\xi\in N\setminus(\bigcup\{M^{\varphi_\zeta}:\zeta<\xi\}\cup\{n_\zeta:\zeta<\xi\})$ for $\xi<\mu$. It is straightforward that $\widetilde{N}\notin \mad$ and $\mad\cup\{\widetilde{N}\}$ is almost disjoint, which is a contradiction.
\end{proof}

From our point of view the sizes of MAD families are important. Clearly there is a MAD family on $\omega$ of size $2^\omega$. The analogue of this does not always hold for $\omg$. Baumgartner in \cite{baum} proves that it is consistent with ZFC that there is no almost disjoint family on $\omg$ of size $2^{\omg}$. However, we have the following fact.

\begin{clm}\label{madsize} If $2^\omega=\omg$ then there is a MAD family $\mad$ on $\omg$ of size $2^{\omg}$.
\end{clm}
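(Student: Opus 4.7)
The plan is to construct an almost disjoint family of size $2^{\omega_1}$ on a fixed set of cardinality $\omega_1$, and then extend it to a MAD family by Zorn's lemma. The key observation is that, under $2^\omega = \omega_1$, the tree $T = 2^{<\omega_1}$ of all binary sequences of countable length has cardinality exactly $\omega_1$, because $|2^\alpha| = 2^{|\alpha|} \leq 2^\omega = \omega_1$ for every $\alpha < \omega_1$.

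For each branch $b \in 2^{\omega_1}$ I would form the chain $A_b = \{b \upharpoonright \alpha : \alpha < \omega_1\} \subseteq T$; this has size $\omega_1$. Any two distinct branches $b \neq b'$ split at some countable ordinal $\alpha_0 < \omega_1$, so $A_b \cap A_{b'} = \{b \upharpoonright \alpha : \alpha \leq \alpha_0\}$ is countable, hence of size $<\omega_1$. Therefore $\{A_b : b \in 2^{\omega_1}\}$ is an almost disjoint family on $T$ of cardinality $|2^{\omega_1}| = 2^{\omega_1}$.

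To finish, I would invoke Zorn's lemma to extend this family to a maximal almost disjoint family $\mathcal{M}$ on $T$, and then transfer through any bijection $T \leftrightarrow \omega_1$ to obtain a MAD family on $\omega_1$. The lower bound $|\mathcal{M}| \geq 2^{\omega_1}$ is immediate from the branches already present, while the matching upper bound $|\mathcal{M}| \leq |[\omega_1]^{\omega_1}| = 2^{\omega_1}$ is a routine cardinal calculation under CH (using $\omega_1^\omega = (2^\omega)^\omega = 2^\omega = \omega_1$ together with $|\mathcal{P}(\omega_1)| = 2^{\omega_1}$). The only real insight needed is the remark in the first paragraph that $|2^{<\omega_1}| = \omega_1$ under CH; once this is in hand, no serious obstacle is expected, and the rest is standard.
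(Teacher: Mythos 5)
Your argument is correct: under $2^\omega=\omega_1$ the tree $2^{<\omega_1}$ has size $\omega_1$, the $2^{\omega_1}$ branches give pairwise countable intersections, and any Zorn extension to a maximal family still has size exactly $2^{\omega_1}$ since $|[\omega_1]^{\omega_1}|=2^{\omega_1}$. The paper states this claim as a known fact and gives no proof at all, so there is nothing to compare against; your branching-tree construction is the standard argument one would supply here.
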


In Section \ref{examp} we use \emph{nonstationary MAD families} $\mad_{NS}\subseteq [\mu]^\mu$ meaning that $\mad_{NS}$ is a MAD family such that every $M\in\mad_{NS}$ is nonstationary in $\mu$. Observe, that using Zorn's lemma to almost disjoint families of nonstationary sets of $\mu$ we can get nonstationary MAD families.

\section{Fragments of Shelah's club guessing}\label{guesssec}

The constructions of the upcoming sections will use the following amazing results of Shelah. For a cardinal $\lambda$ and a regular cardinal $\mu$ let $S^\lambda_\mu$ denote the ordinals in $\lambda$ with cofinality $\mu$. For an $S\subseteq S^\lambda_\mu$ an \emph{S-club sequence} is a sequence $\underline{C}=\langle C_\delta:\delta\in S\rangle$ such that $C_\delta\subseteq\delta$ is a club in $\delta$ of order type $\mu$.

\begin{thm}[{\cite[Claim 2.3]{shelah1}}]\label{shelah1}Let $\lambda$ be a cardinal such that $cf(\lambda)\geq \mu^{++}$ for some regular $\mu$ and let $S\subseteq S_\mu^\lambda$ stationary. Then there is an $S$-club sequence $\underline{C}=\langle C_\delta:\delta\in S\rangle$ such that for every club $E\subseteq \lambda$ there is $\delta\in S$ (equivalently, stationary many) such that $C_\delta\subseteq E$.
\end{thm}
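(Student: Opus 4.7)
The plan is to argue by contradiction via a transfinite construction of length $\mu^+$: starting from any $S$-club sequence, iteratively refine it by ``witnessing'' clubs supplied by the assumed failure of club guessing, and then use a pigeonhole made possible by $\mathrm{cf}(\lambda)\geq\mu^{++}$ to force a contradiction.

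Fix any $S$-club sequence $\underline{C}^0 = \langle C^0_\delta : \delta \in S\rangle$. Suppose toward contradiction that no $S$-club sequence guesses; by the standard trick of intersecting a refuting club with a club disjoint from the set of would-be witnesses, this strengthens to: for each $S$-club sequence $\underline{C}$ there is a club $E(\underline{C})\subseteq\lambda$ with $C_\delta\not\subseteq E(\underline{C})$ for \emph{every} $\delta\in S$. Recursively I would define $E_\alpha = E(\underline{C}^\alpha)$ and refine as follows. At successor stages, whenever $\delta\in\mathrm{acc}(E_\alpha)$ set $C^{\alpha+1}_\delta := C^\alpha_\delta\cap E_\alpha$ (for $\mu$ regular uncountable this is automatically a club in $\delta$ of order type $\mu$, since the intersection of two clubs in a point of cofinality $\mu$ is club); otherwise leave $C^{\alpha+1}_\delta := C^\alpha_\delta$. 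At limit stages $\alpha<\mu^+$, let $C^\alpha_\delta$ be a cofinal subset of order type $\mu$ of $\bigcap_{\beta<\alpha}C^\beta_\delta$.

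The central observation is monotonicity: for each fixed $\delta$ the sequence $\langle C^\alpha_\delta:\alpha<\mu^+\rangle$ is $\subseteq$-decreasing inside the $\mu$-sized set $C^0_\delta$, so the set of stages of strict decrease has cardinality at most $\mu$ and is therefore bounded below $\mu^+$; the sequence stabilizes from some $\alpha(\delta)<\mu^+$. The hypothesis $\mathrm{cf}(\lambda)\geq\mu^{++}$ now bites: the map $\alpha:S\to\mu^+$ partitions the stationary set $S$ into fewer than $\mathrm{cf}(\lambda)$ fibres, so some fibre $S^*\subseteq S$ is stationary and constantly equal to $\alpha^*$. Since $\mathrm{acc}(E_{\alpha^*})$ is club in $\lambda$, $S^*\cap\mathrm{acc}(E_{\alpha^*})$ remains stationary, and for such $\delta$ the refinement rule gives $C^{\alpha^*}_\delta\cap E_{\alpha^*} = C^{\alpha^*+1}_\delta = C^{\alpha^*}_\delta$, i.e.\ $C^{\alpha^*}_\delta\subseteq E_{\alpha^*}$, contradicting the choice of $E_{\alpha^*}$.

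The main obstacle is the delicate bookkeeping that keeps each $C^{\alpha+1}_\delta$ a club of order type \emph{exactly} $\mu$ at both successor and limit stages, and that aligns ``no change at stage $\alpha$'' with ``$C^\alpha_\delta$ already inside $E_\alpha$''. For $\mu=\omega$ the plain intersection $C^\alpha_\delta\cap E_\alpha$ may fail to be cofinal in $\delta$ and one must instead extract a cofinal $\omega$-subsequence, e.g.\ via accumulation points of $C^\alpha_\delta\cap E_\alpha$; this is a real, if routine, technicality, as is the verification that the limit-stage intersections remain cofinal of the correct order type. Once these calibrations are in place, the heart of the proof is simply the pigeonhole on $S$, which is precisely what the cardinal hypothesis $\mathrm{cf}(\lambda)\geq\mu^{++}$ supplies.
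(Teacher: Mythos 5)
The paper does not actually prove this statement; it quotes it as Shelah's theorem and points to \cite[Theorem 2.17]{card} for a detailed proof, so your attempt can only be measured against the standard published argument. Your skeleton --- assume every $S$-club sequence is refuted by a club, iterate the refinement $\mu^+$ times, and let $\mathrm{cf}(\lambda)\geq\mu^{++}$ produce a contradiction --- is indeed that standard argument, and for \emph{uncountable} $\mu$ your successor step and your stabilization-plus-pigeonhole finish are essentially right. But the two places you dismiss as routine technicalities contain genuine gaps. The first is the case $\mu=\omega$, which this paper actually needs: Corollary \ref{pelda1}(1) applies the theorem with $\mu=\omega$, $\lambda=\omega_2$. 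Your proposed repair --- ``extract a cofinal $\omega$-subsequence, e.g.\ via accumulation points of $C^\alpha_\delta\cap E_\alpha$'' --- cannot work, because for $\delta$ of cofinality $\omega$ two cofinal subsets can be disjoint, so $C^\alpha_\delta\cap E_\alpha$ may be bounded in $\delta$ or empty and there is nothing to extract. The standard device is different: one replaces $C_\delta$ by $\{\sup(E_\alpha\cap\gamma):\gamma\in C_\delta,\ \gamma>\min(E_\alpha)\}$, which is a cofinal $\omega$-sequence contained in $E_\alpha$ whenever $\delta\in\mathrm{acc}(E_\alpha)$. Since this is no longer a subset of $C^\alpha_\delta$, your $\subseteq$-monotonicity argument must be replaced by noting that each of the $\omega$ coordinates is a non-increasing ordinal sequence and hence stabilizes below $\omega_1$; after that your pigeonhole finish goes through. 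Without some such change the $\mu=\omega$ case is simply missing.

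The second gap is at limit stages $\alpha<\mu^+$ with $\mathrm{cf}(\alpha)=\mu$: a $\subseteq$-decreasing $\mu$-sequence of clubs in a point $\delta$ of cofinality $\mu$ can have bounded, even empty, intersection, so ``let $C^\alpha_\delta$ be a cofinal subset of $\bigcap_{\beta<\alpha}C^\beta_\delta$'' is not justified as written. It can be rescued only by using the global structure of the construction: arrange, as the standard proof does, that $C^\beta_\delta=C^0_\delta\cap\bigcap_{\gamma<\beta}E_\gamma$ for the relevant $\delta$; then $\bigcap_{\gamma<\beta}E_\gamma$ is a club in $\lambda$ because $\beta<\mu^+<\mathrm{cf}(\lambda)$, and its trace on any $\delta$ in its accumulation points is club in $\delta$. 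This is a second, hidden use of the hypothesis $\mathrm{cf}(\lambda)\geq\mu^{++}$, not only the pigeonhole at the end; note also that passing at limits to a proper ``cofinal subset of order type $\mu$'' would destroy this identity (and is redundant anyway, since for regular $\mu$ every cofinal subset of a set of order type $\mu$ has order type $\mu$). In short: right strategy, but one of your two flagged technicalities is patched incorrectly and the other is left unverified at exactly the point where it can fail.
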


A detailed proof of Theorem \ref{shelah1} can be found in \cite[Theorem 2.17]{card}.

\begin{thm}[{\cite[Claim 3.5]{shelah2}}] \label{shelah2} Let $\lambda$ be a cardinal such that $\lambda=\mu^+$ for some uncountable, regular $\mu$ and $S\subseteq S^\lambda_\mu$ stationary. Then there is an $S$-club sequence $\underline{C}=\langle C_\delta:\delta\in S \rangle$ such that $C_\delta=\{\alpha^\delta_\zeta:\zeta<\mu\}\subseteq \delta$ and for every club $E\subseteq \lambda$ there is $\delta\in S$ (equivalently, stationary many) such that: $$\{\zeta<\mu:\alpha^\delta_{\zeta+1}\in E\} \text{ is stationary}.$$
\end{thm}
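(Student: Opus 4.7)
The plan is an indirect argument via a length-$\mu^+$ refinement process. Assume, for contradiction, that no $S$-club sequence has the desired property. Starting with any $\underline{C}^0=\langle C^0_\delta:\delta\in S\rangle$ where each $C^0_\delta$ is a cofinal subset of $\delta$ of order type $\mu$, I would recursively construct $S$-club sequences $\underline{C}^\xi=\langle C^\xi_\delta:\delta\in S\rangle$ and clubs $E_\xi\subseteq\lambda$ for $\xi<\mu^+$, where at each stage $E_\xi$ witnesses the failure of $\underline{C}^\xi$: for every $\delta\in S$ there is a club $D^{\delta,\xi}\subseteq\mu$ disjoint from $\{\zeta<\mu:\alpha^{\delta,\xi}_{\zeta+1}\in E_\xi\}$.

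To go from stage $\xi$ to $\xi+1$, I would use $E_\xi$ to prune each $C^\xi_\delta$: roughly, define $C^{\xi+1}_\delta$ by enumerating $\{\alpha^{\delta,\xi}_\zeta : \zeta\in D^{\delta,\xi}\}$, adjusted so as to remain a club in $\delta$ of order type $\mu$ (for instance by adding back accumulation points, or by first restricting to a stationary piece of $S$ on which $\delta$ is sufficiently saturated with respect to $E_\xi$). The effect is that the successor points of $C^{\xi+1}_\delta$ all lie outside $E_\xi$, so $E_\xi$ can no longer serve as a failure witness for any $\underline{C}^{\xi'}$ with $\xi'>\xi$, and we have made strict progress in a well-defined sense.

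At a limit stage $\eta<\mu^+$, the sequences $\underline{C}^\xi$ for $\xi<\eta$ form a $\subseteq^*$-decreasing chain of clubs in each $\delta\in S$; I would define $\underline{C}^\eta$ by a suitable diagonal intersection across $\xi<\eta$. The contradiction should come from a global counting argument: since $\lambda=\mu^+$ and the $E_\xi$'s can be chosen from a sufficiently rich algebra of clubs, after $\mu^+$ iterations some structural repetition or collapse must occur, violating the assumption that every $\underline{C}^\xi$ fails.

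The hard part will be the limit stage of cofinality exactly $\mu$, since intersecting $\mu$ many clubs inside an ordinal $\delta$ with $cf(\delta)=\mu$ typically fails to produce a club. Shelah circumvents this by (i) weakening the desired guessing property to the \emph{successor-based} form in the statement (this is precisely why Theorem \ref{shelah1} with its stronger conclusion needs $cf(\lambda)\geq\mu^{++}$, while here we can make do with $\lambda=\mu^+$), and (ii) carefully coding each failure witness into the successor structure $\{\alpha^{\delta,\xi}_{\zeta+1}:\zeta<\mu\}$ rather than into the full club $C^\xi_\delta$, so that the diagonal intersections at cofinality-$\mu$ limits only need to preserve the enumeration pattern, not be unbounded intersections of clubs. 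The delicate bookkeeping involved in these two moves, and the verification that the resulting $\underline{C}^\eta$ remains an $S$-club sequence, is the technical heart of the proof and is carried out in \cite{shelah2}.
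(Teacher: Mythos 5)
The paper itself does not prove this theorem --- it is quoted from Shelah with a pointer to \cite{dummies} for a detailed proof --- so your attempt has to stand on its own, and as written it does not. The high-level shape (assume every candidate sequence fails, iterate $\mu^+$ times, derive a contradiction) is indeed the standard one, but the two steps that carry the actual content are wrong or absent. The successor step goes in the wrong direction: your $D^{\delta,\xi}$ is a club of indices $\zeta$ with $\alpha^{\delta,\xi}_{\zeta+1}\notin E_\xi$, and you prune $C^\xi_\delta$ down to $\{\alpha^{\delta,\xi}_\zeta:\zeta\in D^{\delta,\xi}\}$. The claim that the successor points of the pruned ladder then ``all lie outside $E_\xi$'' is unjustified (the successor points of the new enumeration are indexed by successor points of $D^{\delta,\xi}$, about which nothing is known), and even if it were true it would be counterproductive: it would make $\{\zeta:\alpha^{\delta,\xi+1}_{\zeta+1}\in E_\xi\}$ empty, so $E_\xi$ would remain a witness of failure for $\underline{C}^{\xi+1}$ --- the exact opposite of ``$E_\xi$ can no longer serve as a failure witness.'' In every working version of this argument the new ladder is pushed \emph{into} $E_\xi$, not out of it: one fixes a single ladder $e_\delta$ once and for all and derives $C^\xi_\delta$ from $E_\xi$ by the projection $\alpha\mapsto\sup(E_\xi\cap\alpha)$, so that the derived ladder is contained in $E_\xi$ and each coordinate is a non-increasing function of $\xi$.

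Because the ladders are recomputed from the fixed $e_\delta$ rather than iteratively pruned, the limit-stage difficulty you flag (intersecting $\mu$ many clubs inside a $\delta$ of cofinality $\mu$) never arises: only the clubs $E_\xi\subseteq\lambda$ are intersected at limit stages, and every limit ordinal below $\mu^+$ has cofinality at most $\mu<cf(\lambda)$, so this is unproblematic. Likewise the final contradiction is not a ``counting/structural repetition'' argument from a rich algebra of clubs: it is the observation that a non-increasing sequence of ordinals can strictly decrease only finitely often, so each coordinate $\sup(E_\xi\cap\alpha)$ ($\alpha\in e_\delta$) stabilizes, and by regularity of $\mu^+>\mu=|e_\delta|$ the whole derived ladder stabilizes at some $\xi^*<\mu^+$; a diagonal intersection over the $E_\xi$'s, together with a pressing-down argument on $\mu$ (this is where the hypothesis that $\mu$ is \emph{uncountable} enters --- your sketch never uses it), then produces a $\delta\in S$ at which the stabilized ladder has stationarily many successor points inside $E_{\xi^*+1}$, contradicting the failure witnessed by $E_{\xi^*+1}$. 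Since none of these mechanisms appear in the proposal and the one concrete step it does specify would perpetuate failure rather than remove it, the proposal is not a proof.
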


For a detailed proof, see \cite{dummies}.

\section{The general construction}\label{consec}

\begin{dfn}Let $\lambda>\mu=cf(\mu)$ be infinite cardinals. Let $\mad\subseteq [\mu]^\mu$ be a MAD family, $\mad=\{M^\varphi:\varphi<\kappa\}$ and let $\underline{C}=\{C_\alpha:\alpha\in S^\lambda_\mu\}$ denote an $\slm$-club sequence. We define a topological space $X=X[\lambda,\mu,\mad,\underline{C}]$ as follows. The underlying set of our topology will be a subset of the product $\lambda\times \kappa$. Let
\begin{itemize}
  \item $X_\alpha=\{(\alpha,0)\}$ for $\alpha\in \lambda\setminus S^\lambda_\mu$,
  \item $X_\alpha=\{\alpha\}\times\kappa$ for $\alpha\in S^\lambda_\mu$,
  \item $X=\bigcup \{X_\alpha:\alpha<\lambda\}$.
\end{itemize}
Let $C_\alpha=\{a_\alpha^\xi:\xi<\mu\}$ denote the increasing enumeration for $\alpha\in\slm$. For each $\alpha\in\slm$ let
\begin{itemize}
  \item $I_\alpha^\xi=(a_\alpha^\xi,a_\alpha^{\xi+1}]$ for $\xi\in \text{succ} (\mu)\cup\{0\}$,
  \item $I_\alpha^\xi=[a_\alpha^\xi,a_\alpha^{\xi+1}]$ for $\xi\in\lim (\mu)$.
\end{itemize}
Note that $\bigcup\{I_\alpha^\xi:\xi<\mu\}=(a_\alpha^0, \alpha)$ is a disjoint union.

Define the topology on $X$ by neighborhood bases as follows;
\begin{enumerate}[(i)]
  \item for $\alpha\in \slm$ and $\varphi<\kappa$ let $$U((\alpha,\varphi),\eta)=\{(\alpha,\varphi)\}\cup\bigcup\{X_\gamma:\gamma\in\cup\{I_\alpha^\xi:\xi\in M^\varphi\setminus \eta\}\}\text{ for }  \eta<\mu$$
      and let $$B(\alpha,\varphi)=\{U((\alpha,\varphi),\eta):\eta<\mu\}$$ be a base for the point $(\alpha,\varphi)$.

      \psfrag{a}{$\alpha$}
\psfrag{ab}{$(\alpha,\varphi)$}
\psfrag{om}{$\lambda$}
\psfrag{Xa}{$X_\alpha$}
\psfrag{an}{\tiny{$a_\alpha^\xi$}}
\psfrag{an1}{\tiny{$a_\alpha^{\xi+1}$}}
\psfrag{I}{$\empty$}
\begin{center}
    \includegraphics[keepaspectratio, width=5 cm]{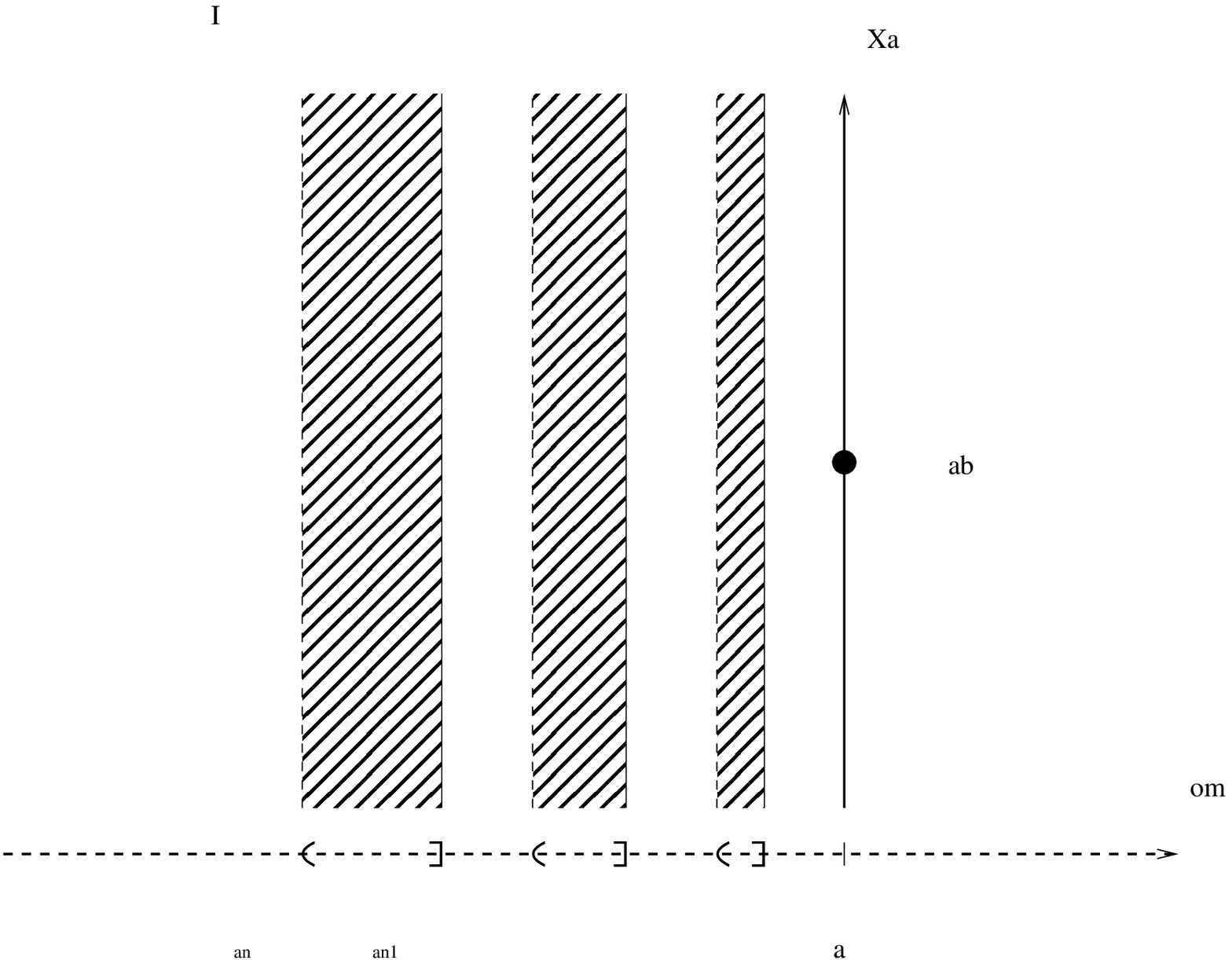}
\end{center}

  \item for $\alpha\in S^\lambda_{<\mu}\cup \text{succ}(\lambda)\cup \{0\}$ let $(\alpha,0)$ be an isolated point,
  \item for $\alpha\in S^\lambda_{\mu'}$ where $\mu'>\mu$ let $$U(\alpha,\beta)=\bigcup\{X_\gamma:\beta<\gamma\leq\alpha\}\text{ for }  \beta<\alpha$$
      and let $$B(\alpha)=\{U(\alpha,\beta):\beta<\alpha\}$$ be a base for the point $(\alpha,0)$.
\end{enumerate}
\end{dfn}

It is straightforward to check that these \emph{basic open sets} form neighborhood bases.

 \begin{center}
    $\star$
 \end{center}

  Fix some cardinals $\lambda>\mu=cf(\mu)$, a MAD family $\mad=\{M^\varphi:\varphi<\kappa\}\subseteq[\mu]^\mu$ and $\slm$-club sequence $\underline{C}$. In the following $X=\ter$.

\begin{clm}  \label{basic}The space $\ter$ is 0-dimensional, $T_2$ and scattered. Observe that
\begin{enumerate}[(a)]
  \item $X_\alpha$ is closed discrete for all $\alpha<\lambda$, moreover
  \item $\bigcup\{X_\alpha:\alpha\in A\}$ is closed discrete for all $A\in[\lambda]^{<\mu}$,
  \item $X_{\leq\alpha}=\bigcup\{X_\beta:\beta\leq\alpha\}$ is clopen for all $\alpha<\lambda$.
\end{enumerate}
\end{clm}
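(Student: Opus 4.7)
The plan rests on one structural observation: every basic open neighborhood of a point $(\beta,\varphi)\in X$ is ``downward-looking'' in the first coordinate---it meets $X_\beta$ only at $(\beta,\varphi)$ itself and is otherwise contained in $\bigcup\{X_\gamma:\gamma<\beta\}$---with the downward reach tunable by enlarging the defining parameter ($\eta$ for $\beta\in\slm$, or the lower-bound $\beta'$ for $\beta\in S^\lambda_{\mu'}$ with $\mu'>\mu$). Two additional facts get reused throughout: almost disjointness $|M^\varphi\cap M^\psi|<\mu$ for distinct members of $\mad$, which separates distinct points of $X_\alpha$ when $\alpha\in\slm$; and the fact that $C_\beta$ converges cofinally to $\beta$ for $\beta\in\slm$, so $a_\beta^\eta$ can be pushed arbitrarily high below $\beta$.

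For 0-dimensionality I would verify that each listed basic open set $U$ is clopen. Openness is immediate. For closedness, fix $U$ and a point $z=(\beta,\psi)\notin U$: if $\beta$ sits above or below the range of first coordinates in $U$, the downward-looking principle supplies a basic neighborhood of $z$ missing $U$; if $\beta$ lies in an ``unused'' interval $I_\alpha^{\xi_0}$ (with $U=U((\alpha,\varphi),\eta)$ and $\xi_0\notin M^\varphi\setminus\eta$), then $a_\alpha^{\xi_0}<\beta$---using the cofinality constraints on $\beta$ to exclude $\beta=a_\alpha^{\xi_0}$ when $\xi_0$ is a limit---so $z$'s downward reach can be shrunk to stay inside $I_\alpha^{\xi_0}$ via the convergence fact; and if $\beta=\alpha$ with $\psi\neq\varphi$, $|M^\varphi\cap M^\psi|<\mu$ lets $z$'s and $U$'s downward reaches select disjoint interval-indices. $T_2$ is an immediate corollary. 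For scatteredness, given $\emptyset\neq Y\subseteq X$, set $\alpha_0=\min\{\alpha:Y\cap X_\alpha\neq\emptyset\}$ and pick any $y\in Y\cap X_{\alpha_0}$; a basic neighborhood of $y$ meets $X_{\alpha_0}$ only in $\{y\}$ and is otherwise contained in $\bigcup\{X_\gamma:\gamma<\alpha_0\}$, which misses $Y$ by minimality, so $y$ is isolated in $Y$.

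For (a), $X_\alpha$ is either a singleton (when $\alpha\notin\slm$) or discrete by $U((\alpha,\varphi),\eta)\cap X_\alpha=\{(\alpha,\varphi)\}$, and closedness follows from the 0-dimensionality argument. Part (b) rests on the combinatorial remark that for $A\in[\lambda]^{<\mu}$ and $\alpha\in\slm\cap A$, the index set $\{\xi<\mu:I_\alpha^\xi\cap A\neq\emptyset\}$ has cardinality $\leq|A|<\mu$ and is therefore bounded in $\mu$ by regularity; taking $\eta$ above this bound gives a basic neighborhood of $(\alpha,\varphi)$ avoiding $\bigcup\{X_{\alpha'}:\alpha'\in A\setminus\{\alpha\}\}$. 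For $\alpha\in A$ with $\mathrm{cf}(\alpha)>\mu$, $\mathrm{cf}(\alpha)>|A|$ bounds $A\cap\alpha$ strictly below $\alpha$, so a base-parameter above this bound works; closedness of $\bigcup_{\alpha\in A}X_\alpha$ is symmetric. Part (c) is the cleanest: every basic neighborhood of a point in $X_{\leq\alpha}$ is downward-looking hence contained in $X_{\leq\alpha}$, and the same argument applied above $\alpha$ shows $X_{>\alpha}$ is open.

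The main obstacle is the closedness step within 0-dimensionality, specifically the ``$\beta$ lies in an unused interval'' subcase, where one has to juggle the cofinality of $\beta$, its position relative to the endpoints of $I_\alpha^{\xi_0}$, and the convergence of $C_\beta$. Everything else is a direct application of the downward-looking principle together with the cardinality bookkeeping powering (b).
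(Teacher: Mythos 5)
Your proposal is correct and follows essentially the same route as the paper: the ``downward-looking'' structure of basic neighborhoods (the paper's observation $(*)$), the MAD almost-disjointness for separating points within a column $X_\alpha$, and the cardinality/cofinality bookkeeping for part (b) all match the paper's argument. The only cosmetic differences are that the paper phrases scatteredness as right-separation by the lexicographic order and reduces 0-dimensionality to closedness of the sets $U((\alpha,\varphi),\eta)$, whereas you verify clopenness of all basic open sets directly with a somewhat fuller case analysis.
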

\begin{proof}First we prove that $\ter$ is $T_2$. Note that
\begin{enumerate}[($*$)]
        \item $\bigcup\{X_\gamma:\beta<\gamma\leq\alpha\}$ is clopen for all $\beta<\alpha<\lambda$.
\end{enumerate}
 Thus $(\alpha,\varphi),(\alpha',\varphi')\in X$ can be separated trivially if $\alpha\neq\alpha'$. Suppose that $\alpha=\alpha'\in\slm$ and $\varphi\neq\varphi'<\kappa$. There is $\eta<\mu$ such that $(M^\varphi\cap M^{\varphi'})\setminus \eta=\emptyset$ since $|M^\varphi\cap M^{\varphi'}|<\mu$. Thus $U((\alpha,\varphi),\eta)\cap U((\alpha,\varphi'),\eta)=\emptyset$.

 Next we show that $\ter$ is 0-dimensional. By $(*)$ it is enough to prove that $U((\alpha,\varphi),\eta)$ is closed for all $\alpha\in \slm$, $\varphi<\kappa$ and $\eta<\mu$. Suppose $x=(\alpha',\varphi')\in X\setminus U((\alpha,\varphi),\eta)$, we want to separate $x$ from $U((\alpha,\varphi),\eta)$ by an open set. Let $\alpha=\alpha'$. There is $\eta'<\mu$ such that $(M^\varphi\cap M^{\varphi'})\setminus \eta'=\emptyset$, thus $U((\alpha,\varphi),\eta)\cap U((\alpha,\varphi'),\eta')=\emptyset$. Let $\alpha\neq \alpha'$. If $\alpha'\in S^\lambda_{<\mu}\cup \text{succ}(\lambda)\cup \{0\}$ then $x$ is isolated, thus we are done. Suppose $\alpha\in S^\lambda_{\mu'}$ where $\mu'\geq\mu$. Then $\beta=\sup (C_\alpha\setminus \alpha')<\alpha'$ thus $U(\alpha',\beta)\cap U((\alpha,\varphi),\eta)=\emptyset$.

  $\ter$ is scattered since $\ter$ is right separated by the lexicographical ordering on $\lambda\times\kappa$.

   (a) and (c) is trivial, we prove (b). Suppose $x=(\alpha',\varphi')\in X$, we prove that there is a neighborhood $U$ of $x$ such that $|U\cap \bigcup\{X_\alpha:\alpha\in A\}|\leq 1$. If $\alpha'\in S^\lambda_{<\mu}\cup \text{succ}(\lambda)\cup \{0\}$ then $x$ is isolated, thus we are done. Suppose $\alpha\in S^\lambda_{\mu'}$ where $\mu'\geq\mu$. Then $\beta=\sup (A\setminus \alpha')<\alpha'$ thus the open set $U=\{x\}\cup\bigcup\{X_\gamma:\beta<\gamma<\alpha\}$ will do the job.
\end{proof}

\section{Focusing on property D and \lowercase{a}D}\label{Dsec}

 Again fix some cardinals $\lambda>\mu=cf(\mu)$, a MAD family $\mad=\{M^\varphi:\varphi<\kappa\}\subseteq[\mu]^\mu$ and $\slm$-club sequence $\underline{C}$. Our next aim is to investigate the spaces $X=\ter$ concerning property D and aD.

\begin{dfn} Let $\pi(F)=\{\alpha<\lambda:F\cap X_\alpha\neq\emptyset\}$ for $F\subseteq X$. $F$ is said to be \emph{(un)bounded} if $\pi(F)$ is (un)bounded in $\lambda$.
\end{dfn}

\begin{clm}\label{sorkomp}If $F\subseteq X$ and $\pi(F)$ accumulates to $\alpha\in S^\lambda_\eta$ such that $\mu\leq\eta<\lambda$ then $F'\cap X_\alpha\neq\emptyset$.
\end{clm}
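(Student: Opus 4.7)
The plan is to split into two cases according to whether the cofinality $\eta$ of $\alpha$ equals $\mu$ or is strictly larger, since the neighborhood bases at points of $X_\alpha$ are given by different clauses of the definition in these two cases. In both cases we want to exhibit an explicit point of $X_\alpha$ every neighborhood of which meets $F$ in some point outside $X_\alpha$, so that the point is an accumulation point of $F$.

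First I would handle the easier case $\eta>\mu$. Then $X_\alpha=\{(\alpha,0)\}$ and the basic neighborhoods of $(\alpha,0)$ are the ``end segments'' $U(\alpha,\beta)=\bigcup\{X_\gamma:\beta<\gamma\leq\alpha\}$ for $\beta<\alpha$. Since $\pi(F)$ accumulates to $\alpha$, for every $\beta<\alpha$ there is some $\gamma\in\pi(F)$ with $\beta<\gamma<\alpha$, and then $F\cap X_\gamma\subseteq U(\alpha,\beta)\setminus\{(\alpha,0)\}$ is nonempty. Hence $(\alpha,0)\in F'\cap X_\alpha$.

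The real content is the case $\eta=\mu$, i.e.\ $\alpha\in\slm$, and here the MAD property of $\mad$ is the crucial tool. Enumerate $C_\alpha=\{a_\alpha^\xi:\xi<\mu\}$ increasingly and recall that the intervals $I_\alpha^\xi$ partition $(a_\alpha^0,\alpha)$. Put
\[
\Xi=\{\xi<\mu:\pi(F)\cap I_\alpha^\xi\neq\emptyset\}.
\]
Because $\pi(F)$ is cofinal in $\alpha$ and the $a_\alpha^\xi$ also tend to $\alpha$, the set $\Xi$ is unbounded in $\mu$; since $\mu=\operatorname{cf}(\mu)$, this means $|\Xi|=\mu$, so $\Xi\in[\mu]^\mu$. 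By maximality of $\mad$ there exists $\varphi<\kappa$ with $|M^\varphi\cap\Xi|=\mu$. I claim $(\alpha,\varphi)\in F'$: a basic neighborhood of this point is $U((\alpha,\varphi),\eta')=\{(\alpha,\varphi)\}\cup\bigcup\{X_\gamma:\gamma\in\bigcup\{I_\alpha^\xi:\xi\in M^\varphi\setminus\eta'\}\}$, and since $M^\varphi\cap\Xi$ is cofinal in $\mu$, one can always pick $\xi\in(M^\varphi\cap\Xi)\setminus\eta'$, hence some $\gamma\in\pi(F)\cap I_\alpha^\xi$, and a witnessing point of $F$ in $X_\gamma$, which is distinct from $(\alpha,\varphi)$.

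I don't expect serious obstacles: once the partition structure on $(a_\alpha^0,\alpha)$ given by $C_\alpha$ is visible, the whole argument reduces to checking that $\Xi$ has size $\mu$ and then invoking maximality of $\mad$. The only point to watch is that ``$\pi(F)$ accumulates to $\alpha$'' is really used to force $\Xi$ cofinal in $\mu$ rather than merely of size $\mu$, which is what lets the neighborhood filter at $(\alpha,\varphi)$ pick up a point of $F$ regardless of the truncation parameter $\eta'$.
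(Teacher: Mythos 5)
Your proof is correct and follows essentially the same route as the paper: the case $\eta>\mu$ is handled via the end-segment neighborhoods, and for $\eta=\mu$ your set $\Xi$ is exactly the paper's $N=\{\xi<\mu: I_\alpha^\xi\cap\pi(F)\neq\emptyset\}$, with maximality of $\mad$ producing the accumulation point $(\alpha,\varphi)$. Your added remarks (that $\Xi$ is cofinal, hence of size $\mu$ by regularity, and that cofinality of $M^\varphi\cap\Xi$ is what defeats every truncation parameter $\eta'$) just make explicit what the paper leaves as ``straightforward.''
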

\begin{proof} If $\eta>\mu$ then $X_\alpha=\{(\alpha,0)\}$ and each neighborhood $U(\alpha,\beta)$ of $(\alpha,0)$ intersects $F$. Thus $F'\cap X_\alpha\neq\emptyset$. Let us suppose that $\pi(F)$ accumulates to $\alpha\in S^\lambda_\mu$. Since $\bigcup\{I_\alpha^\xi:\xi<\mu\}=(a_\alpha^0,\alpha)$, the set $N=\{\xi<\mu:I_\alpha^\xi\cap \pi(F)\neq\emptyset\}$ has cardinality $\mu$. Thus there is some $\varphi<\kappa$ such that $|N\cap M^\varphi|=\mu$, since $\mad$ is MAD family. It is straightforward that $(\alpha,\varphi)\in F'$ since $U((\alpha,\varphi),\eta)\cap F\neq\emptyset$ for all $\eta<\mu$.
\end{proof}

\begin{cor} \label{nlind} If $cf(\lambda)\geq \mu$ then a closed unbounded subspace $F\subseteq X$ is not a linearly D-subspace of $X$. Hence $\ter$ is not a linearly D-space.
\end{cor}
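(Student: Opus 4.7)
The plan is to apply Theorem \ref{linD}: it suffices to exhibit, on $F$, a non-trivial monotone open cover admitting no closed discrete $\mathcal{U}$-big set; specializing $F=X$ will then yield the last sentence of the corollary.

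The cover that suggests itself is
\[
\mathcal{U}=\{F\cap X_{\leq\alpha}:\alpha<\lambda\}.
\]
The sets $X_{\leq\alpha}$ are clopen by Claim \ref{basic}(c), the family is linearly ordered by inclusion, it covers $F$ trivially since $X=\bigcup_{\alpha<\lambda}X_{\leq\alpha}$, and the unboundedness of $F$ in $\lambda$ guarantees that no single member of $\mathcal{U}$ covers $F$, so the cover is non-trivial. The key observation is that $D\subseteq F$ is $\mathcal{U}$-big precisely when $\pi(D)$ is unbounded in $\lambda$.

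The remaining step is to rule out closed discrete $\mathcal{U}$-big subsets of $F$. Suppose for contradiction that $D\subseteq F$ is closed discrete with $\pi(D)$ unbounded in $\lambda$. I would pick an increasing sequence $\langle\beta_\xi:\xi<\mu\rangle$ inside $\pi(D)$ and set $\alpha^{*}=\sup_{\xi<\mu}\beta_\xi$. Using $cf(\lambda)\geq\mu$ one can arrange $\alpha^{*}<\lambda$; regularity of $\mu$ then forces $cf(\alpha^{*})=\mu$, so $\alpha^{*}\in\slm$. By construction $\pi(D)$ accumulates at $\alpha^{*}$, so Claim \ref{sorkomp} supplies a point of $D'\cap X_{\alpha^{*}}$, contradicting the closed discreteness of $D$ (which demands $D'=\emptyset$).

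The only delicate step is securing $\alpha^{*}<\lambda$, and this is the single place the hypothesis on $cf(\lambda)$ is invoked; once that is in hand, Claim \ref{sorkomp} is precisely the tool that converts unboundedness of a closed discrete set into a forbidden topological accumulation, finishing the argument. Taking $F=X$ delivers the final assertion that $\ter$ is not linearly D.
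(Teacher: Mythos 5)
Your proof is correct and follows the paper's argument essentially verbatim: the paper likewise invokes Theorem \ref{linD} for the monotone cover $\{X_{\leq\alpha}:\alpha<\lambda\}$ and uses Claim \ref{sorkomp} to conclude that $|\pi(D)|<\mu$ for every closed discrete $D$, so no $\mathcal{U}$-big closed discrete set exists. One caveat, which your write-up shares with the paper's one-line version of this step: the claim that ``one can arrange $\alpha^{*}<\lambda$'' really requires $cf(\lambda)>\mu$ (or that $\pi(D)$ has order type greater than $\mu$), since if $cf(\lambda)=\mu$ a closed discrete $D$ with $\pi(D)$ cofinal in $\lambda$ of order type exactly $\mu$ can exist --- all of its limit points below $\lambda$ then have cofinality $<\mu$ and are isolated in $X$, so Claim \ref{sorkomp} never applies. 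As every space constructed in Section \ref{examp} has $cf(\lambda)>\mu$, this edge case does not affect the paper, but it is worth being aware that the hypothesis $cf(\lambda)\geq\mu$ alone does not secure the boundedness of $\alpha^{*}$.
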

\begin{proof} Let $F\subseteq X$ be closed unbounded. $|\pi(D)|<\mu$ for every closed discrete $D\subseteq X$ by Claim \ref{sorkomp}. Thus there is no big closed discrete set for the open cover $\{X_{\leq \alpha}:\alpha<\lambda\}$ which shows that $F$ is not linearly D by Theorem \ref{linD}.
\end{proof}

Our aim now is to prove that in certain cases the space $\ter$ is an aD-space, equivalently every closed subspace of it is irreducible.

\begin{clm}\label{korld} Every closed, bounded subspace $F\subseteq X$ is a D-subspace of $X$; hence $F$ is irreducible.
\end{clm}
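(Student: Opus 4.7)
The plan is to prove that every closed bounded $F\subseteq X$ is a D-subspace by transfinite induction on $\alpha_0=\sup\pi(F)$. The ``hence $F$ is irreducible'' clause then follows at once because D-spaces are aD and, by the equivalence from \cite[Theorem 1.8]{arhcov} stated above, aD-spaces have all their closed subspaces irreducible; in particular, $F$ is irreducible as a closed subspace of itself.

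For the inductive step, given an ONA $U$ on $F$, the strategy is to peel off the entire top slice $F\cap X_{\alpha_0}$ -- which is closed discrete in $X$ by Claim \ref{basic}(a) -- put it wholesale into the kernel $D$, and recurse on what remains. Concretely, for each $d\in F\cap X_{\alpha_0}$ fix a basic open $V_d\subseteq X$ with $d\in V_d$ and $V_d\cap F\subseteq U(d)$, and set $G=F\setminus\bigcup_{d}U(d)$; then $G$ is closed in $F$ (hence in $X$) and $G\cap X_{\alpha_0}=\emptyset$. In three cases one gets $\sup\pi(G)<\alpha_0$ directly, so the inductive hypothesis furnishes a kernel $D'$ for $G$ and $D=(F\cap X_{\alpha_0})\cup D'$ does the job: (i) if $\alpha_0\in S^\lambda_\eta$ with $\eta>\mu$, the basic neighborhood of $(\alpha_0,0)$ has the form $U(\alpha_0,\beta)$, so $G\subseteq X_{\leq\beta}$; (ii) if $\alpha_0\in S^\lambda_\mu$, the contrapositive of Claim \ref{sorkomp} applied to $G$ -- which is closed, hence $G'\subseteq G$, with $G\cap X_{\alpha_0}=\emptyset$ -- rules out any accumulation of $\pi(G)$ at $\alpha_0$; and (iii) if $\alpha_0$ is a successor or $0$, trivially.

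The only delicate case is $\alpha_0\in S^\lambda_{<\mu}$, where $\mathrm{cf}(\alpha_0)=\theta<\mu$ and $\sup\pi(G)$ may still equal $\alpha_0$. Here the plan is to exploit that $(\alpha_0,0)$ is isolated in $X$, so it can freely be added to $D$ when it belongs to $F$, and then to decompose along a cofinal sequence $\beta_\xi\nearrow\alpha_0$ of length $\theta$: each piece $G_\xi=G\cap(X_{\leq\beta_\xi}\setminus X_{\leq\beta_{\xi-1}})$ (with $X_{\leq\beta_{-1}}:=\emptyset$) is closed bounded with $\sup\pi(G_\xi)\leq\beta_\xi<\alpha_0$, so by induction admits a kernel $D_\xi$; set $D=\{(\alpha_0,0)\}\cup\bigcup_{\xi<\theta}D_\xi$, with $(\alpha_0,0)$ included iff it belongs to $F$. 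That the resulting $D$ is closed discrete in $X$ is routine bookkeeping with the clopen chain of Claim \ref{basic}(c): the pieces $D_\xi$ sit in pairwise disjoint clopen regions, cannot accumulate to the isolated point $(\alpha_0,0)$, and any $y\notin X_{\leq\alpha_0}$ is separated from $D$ by a tail of its own basic neighborhood. The main obstacle is keeping this case split clean; Claim \ref{sorkomp} is the essential engine forcing strict descent after top-peeling whenever $\mathrm{cf}(\alpha_0)\geq\mu$, with the $\mathrm{cf}(\alpha_0)<\mu$ case compensated by the direct clopen decomposition.
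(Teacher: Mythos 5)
Your proof is correct and follows essentially the same route as the paper: induction on $\sup\pi(F)$, with Claim \ref{sorkomp} forcing strict descent after removing the neighborhoods of the top slice $F\cap X_{\alpha_0}$ when $cf(\alpha_0)\geq\mu$, and a clopen decomposition along a short cofinal sequence when $cf(\alpha_0)<\mu$. The only cosmetic difference is that you peel off the isolated top point before decomposing in the small-cofinality case, whereas the paper includes $X_\alpha$ as the final piece $J_\nu$ of its decomposition.
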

\begin{proof} We prove that $F\subseteq X$ is a D-subspace of $X$ by induction on $\alpha=\sup\pi(F)<\lambda$. Let $U: F\rightarrow \tau$ be an ONA. If $\alpha$ is a successor (or $\alpha=0$), then $F_0=F\setminus U((\alpha,0))$ is closed and $\sup(F_0)<\alpha$ thus we are easily done by induction.

Let $\alpha\in S^\lambda_{\mu'}$ where $\mu\leq\mu'<\lambda$. Then $\sup\pi(F_0)<\alpha$  where $F_0=F\setminus \cup U[X_\alpha\cap F]$ by Claim \ref{sorkomp}. Thus we are easily done by induction and the fact that $X_\alpha$ is closed discrete.

Now let $\nu=cf(\alpha)<\mu$, let $\sup\{\alpha_\xi:\xi<\nu\}=\alpha$ such that $\alpha_0=0$ and $\{\alpha_\xi:\xi<\nu\}$ is strictly increasing. Let $J_\xi=\bigcup\{X_\gamma:\alpha_\xi\leq\gamma\leq\alpha_{\xi+1}\}$ if $\xi<\nu$ is limit or $\xi=0$ and $J_\xi=\bigcup\{X_\gamma:\alpha_\xi<\gamma\leq\alpha_{\xi+1}\}$ if $\xi<\nu$ is a successor. Let $J_\nu=X_\alpha$. Clearly $\{J_\xi:\xi\leq\nu\}$ is a discrete family of disjoint clopen sets such that $\bigcup \{J_\xi:\xi\leq\nu\}=X_{\leq\alpha}$. $F=\bigcup\{F^\xi:\xi\leq\nu\}$ where $F^\xi=F\cap J_\xi$ is closed for $\xi\leq\nu$. By induction, for all $\xi<\nu$ there is some closed discrete kernel $D^\xi\subseteq F^\xi$ for the restriction of $U$ to $F^\xi$. Let $D^\nu=F^\nu$. Then $D=\bigcup\{D^\xi:\xi\leq\nu\}$ is closed discrete and $F\subseteq\cup U[D]$.
\end{proof}

To handle the unbounded closed subsets we need the following definition.

\begin{dfn} Let $F_\alpha=F\cap X_\alpha$  for $F\subseteq X$ and $\alpha<\lambda$. A  \emph{subset $F\subseteq X$ is high enough} if $$|\{\alpha<\lambda: |F_\alpha|=|F|\}|\geq \mu.$$ We say that a \emph{subset $F\subseteq X$ is high} if every closed unbounded subset of $F$ is high enough.
\end{dfn}

The following rather technical claim will be useful.

 \begin{clm}\label{tech}For any $F\subseteq X$ and ONA $U:F\rightarrow \tau$ such that $U(x)$ is a basic open neighborhood of $x\in F$, let $$Y_F=\{x\in F: \exists\alpha<\lambda:F_\alpha\subseteq U(x),|F_\alpha|=|F|\},$$ $$\Gamma_F=\{\alpha<\lambda: |F_\alpha|=|F|,\exists x\in F:  F_\alpha\subseteq U(x)\}.$$
If $F$ is closed and high enough then $Y_F,\Gamma_F\neq\emptyset$.
\end{clm}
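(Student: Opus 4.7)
Write $A=\{\alpha<\lambda:|F_\alpha|=|F|\}$, so by hypothesis $|A|\geq\mu$. Since $|A|\geq\mu>1$ forces $|F|\geq 2$ and the fibers $X_\alpha$ with $\alpha\notin\slm$ are singletons, we must have $A\subseteq\slm$. The plan is to locate a point $x=(\alpha^*,\varphi)\in F$ together with some $\alpha\in A$ such that the basic neighborhood $U((\alpha^*,\varphi),0)$ contains the entire fiber $X_\alpha\supseteq F_\alpha$; that immediately puts $x\in Y_F$ and $\alpha\in\Gamma_F$.

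The key preliminary step is to extract a subset $B\subseteq A$ of cardinality $\mu$ whose supremum $\alpha^*$ lies in $\slm\cap\lambda$. I would start from any bounded-in-$\lambda$ subset $B_0\subseteq A$ of size $\mu$ — for example the first $\mu$ elements of $A$ in the natural order, which is bounded in $\lambda$ under the cofinality assumption governing the applications. Let $\alpha^*_0:=\sup B_0$. If $cf(\alpha^*_0)=\mu$ we take $B:=B_0$ and stop; otherwise $\nu:=cf(\alpha^*_0)<\mu$, and decomposing $B_0$ along a $\nu$-cofinal sequence in $\alpha^*_0$ produces, by the regularity of $\mu$, a piece of size $\mu$ bounded strictly below $\alpha^*_0$. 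Replace $B_0$ by that piece and iterate. The sequence of suprema is strictly descending, so the process terminates in finitely many steps and yields the desired $B\subseteq A\cap\alpha^*$, cofinal of order type $\mu$ in $\alpha^*\in\slm$.

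With $\alpha^*$ in hand, the MAD argument from the proof of Claim \ref{sorkomp} takes over. The intervals $\{I^\xi_{\alpha^*}:\xi<\mu\}$ partition $(a^0_{\alpha^*},\alpha^*)$, and because $B$ is cofinal of size $\mu$ with $\mu$ regular, the set $N=\{\xi<\mu:I^\xi_{\alpha^*}\cap B\neq\emptyset\}$ has cardinality $\mu$. Maximality of $\mad$ on $\mu$ then delivers some $\varphi<\kappa$ with $|M^\varphi\cap N|=\mu$. For any $\eta<\mu$ the trace $M^\varphi\cap N\setminus\eta$ is still of cardinality $\mu$; picking $\xi$ in it and $\alpha\in B\cap I^\xi_{\alpha^*}$ shows $X_\alpha\subseteq U((\alpha^*,\varphi),\eta)$ and $F_\alpha\neq\emptyset$, so every basic neighborhood of $(\alpha^*,\varphi)$ meets $F$. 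Hence $(\alpha^*,\varphi)\in F'$, and closedness of $F$ upgrades this to $(\alpha^*,\varphi)\in F$.

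Finally, setting $\eta=0$ and picking any $\xi\in M^\varphi\cap N$ with $\alpha\in B\cap I^\xi_{\alpha^*}\subseteq A$ gives $F_\alpha\subseteq X_\alpha\subseteq U((\alpha^*,\varphi),0)$, certifying $(\alpha^*,\varphi)\in Y_F$ and $\alpha\in\Gamma_F$. The real obstacle is the preliminary step of locating the right $\alpha^*$: one must simultaneously push the cofinality of the supremum up to $\mu$ and keep it below $\lambda$, which is exactly what the descending-supremum iteration accomplishes. Everything downstream is essentially a rerun of the proof of Claim \ref{sorkomp}.
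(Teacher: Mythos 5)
Your argument is correct and is essentially the paper's own proof: the paper also takes $Z=\{\alpha<\lambda:|F_\alpha|=|F|\}$, picks an accumulation point $\beta\in\slm$ of $Z$, and applies Claim \ref{sorkomp} to $D=\bigcup\{F_{\alpha'}:\alpha'\in Z\}$ to land a point of $F$ in $X_\beta$ witnessing $Y_F\neq\emptyset$; your descending-supremum extraction of $\alpha^*$ and your inlined MAD argument merely make explicit the two steps the paper leaves implicit. The one slip is the final ``setting $\eta=0$'': membership in $Y_F$ requires $F_\alpha\subseteq U(x)$ for the neighborhood $U(x)=U((\alpha^*,\varphi),\eta_x)$ assigned by the given ONA, not for the largest basic neighborhood $U((\alpha^*,\varphi),0)$, but your preceding paragraph already proves the containment for every $\eta<\mu$, so applying it with $\eta=\eta_x$ finishes the proof.
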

\begin{proof}Since $Y_F\neq\emptyset$ iff $\Gamma_F\neq\emptyset$, it is enough to show that there is some $x\in Y_F$. Since $F$ is high enough, $|Z|\geq\mu$ for $Z=\{\alpha'<\lambda: |F|=|F_{\alpha'}|\}$. Let $D=\bigcup\{F_{\alpha'}:\alpha'\in Z\}\subseteq F$. Let $\beta\in\slm$ be an accumulation point of $Z=\pi(D)$. Then by Claim \ref{sorkomp} there is some $x\in D'\cap X_\beta$ thus $x\in F$. Clearly $x\in Y_F$.
\end{proof}

\begin{thm} \label{fotetel}If the closed unbounded $F\subseteq X$ is high then $F$ is irreducible.
\end{thm}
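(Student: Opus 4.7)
The plan is a transfinite recursion that iteratively peels off a full-sized level from $F$ using Claim \ref{tech}, concluding with Claim \ref{korld} applied to whatever bounded tail remains. Given an open cover $\mathcal{U}$ of $F$, first refine to basic opens: for each $x\in F$ pick a basic open $U(x)$ with $x\in U(x)\subseteq U$ for some $U\in\mathcal{U}$; it suffices to extract a minimal subfamily from $\{U(x):x\in F\}$.

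Build $(F^{(\xi)})_{\xi\le\delta}$ and $(x_\xi,\alpha_\xi)_{\xi<\delta}$ by recursion with $F^{(0)}=F$, $F^{(\xi+1)}=F^{(\xi)}\setminus U(x_\xi)$, and $F^{(\xi)}=\bigcap_{\zeta<\xi}F^{(\zeta)}$ at limits. So long as $F^{(\xi)}$ is unbounded, it is closed in $X$ and stays \emph{high}: every closed unbounded $G\subseteq F^{(\xi)}$ is also a closed unbounded subset of $F$, hence high enough. Then Claim \ref{tech} yields $x_\xi\in F^{(\xi)}$ and $\alpha_\xi\in\Gamma_{F^{(\xi)}}$ with $F^{(\xi)}_{\alpha_\xi}\subseteq U(x_\xi)$ and $|F^{(\xi)}_{\alpha_\xi}|=|F^{(\xi)}|$. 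Halt at the first $\delta$ with $F^{(\delta)}$ bounded; this occurs before $|X|^+$, since $x_\xi\in F^{(\xi)}\setminus F^{(\xi+1)}$ forces the sequence to strictly decrease. By Claim \ref{korld}, the bounded $F^{(\delta)}$ is a D-subspace of $X$, hence aD, hence irreducible, so $\mathcal{U}|_{F^{(\delta)}}$ admits a minimal open refinement $\mathcal{W}$. Assemble $\mathcal{V}^{\ast}=\{U(x_\xi):\xi<\delta\}\cup\mathcal{W}$, which refines $\mathcal{U}$ and covers $F$ (the $U(x_\xi)$'s cover $F\setminus F^{(\delta)}$ and $\mathcal{W}$ covers $F^{(\delta)}$).

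The hard part will be verifying the minimality of $\mathcal{V}^{\ast}$. Each $W\in\mathcal{W}$ keeps its private witness $z_W\in W\cap F^{(\delta)}$ from minimality of $\mathcal{W}$, and $z_W$ avoids every $U(x_\xi)$ because $F^{(\delta)}\cap U(x_\xi)=\emptyset$ by construction. For each $U(x_\xi)$, a candidate witness in $F^{(\xi)}_{\alpha_\xi}\subseteq U(x_\xi)$ automatically avoids all $U(x_\eta)$ with $\eta<\xi$ (those are disjoint from $F^{(\xi)}$), so the real task is to arrange that some point of $F^{(\xi)}_{\alpha_\xi}$ also dodges every later $U(x_\eta)$ and every $W\in\mathcal{W}$. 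If $x_\eta=(\beta_\eta,\psi_\eta)$ with $\beta_\eta\in\slm$, then $U(x_\eta)\cap X_{\alpha_\xi}\neq\emptyset$ only when $\alpha_\xi$ lies in $I_{\beta_\eta}^\gamma$ for some $\gamma\in M^{\psi_\eta}$; Claim \ref{tech} in fact furnishes $\geq\mu$ candidates for $\alpha_\xi$ in $\Gamma_{F^{(\xi)}}$ (because the set $Z$ in its proof has $|Z|\geq\mu$ and $\mu$ of its members fall into captured $I_\beta^\zeta$'s), and the MAD plus club-guessing structure should let $\alpha_\xi$ be chosen generically enough to escape. This delicate selection -- or, alternatively, a post-hoc pruning step discarding redundant $U(x_\xi)$'s using that $|F^{(\xi)}_{\alpha_\xi}|=|F^{(\xi)}|$ dominates the relevant combinatorics -- is where the full strength of $F$ being high must be invoked, and is the main obstacle.
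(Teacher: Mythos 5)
There is a genuine gap, and you in effect acknowledge it yourself: the minimality of $\mathcal{V}^{\ast}$ is exactly the content of the theorem, and your sketch leaves it unresolved. Your plan peels off one point $x_\xi$ per step and hopes to find, inside $F^{(\xi)}_{\alpha_\xi}$, a witness for $U(x_\xi)$ that dodges all \emph{later} $U(x_\eta)$ and all $W\in\mathcal{W}$; the appeal to choosing $\alpha_\xi$ ``generically enough'' via the MAD/club-guessing structure is not an argument, and the fallback of ``post-hoc pruning of redundant members'' does not terminate in a minimal subfamily in general -- that failure is precisely why irreducibility is a nontrivial property. Note also that a minimal open \emph{refinement} need not be a subfamily of $\{U(x):x\in F\}$; insisting on keeping the sets $U(x_\xi)$ intact is what forces you into the unresolvable witness-dodging problem.

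The paper's proof avoids this entirely by two devices you are missing. First, at stage $\xi$ it does not pick a single $x_\xi$ but collects the whole set $Y^\xi=\{x: F^\xi_{\alpha_\xi}\subseteq U(x)\}$ and removes $\bigcup U[Y^\xi]$ at once; since $|F^\xi_{\alpha_\xi}|=|F^\xi|\geq|Y^\xi|$, one can fix an injection $h^\xi:Y^\xi\to F^\xi_{\alpha_\xi}$ assigning each $x$ its own private point. Second, a separate lemma (using Claim \ref{sorkomp} at $\alpha=\sup\{\alpha_\xi:\xi<\mu\}\in\slm$) shows the induction stops in fewer than $\mu$ steps, so $\mathrm{ran}(h)\subseteq\bigcup\{F_{\alpha_\zeta}:\zeta<\xi\}$ is closed discrete by Claim \ref{basic}(b). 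One then \emph{shrinks} each neighborhood to $U_0(x)=(U(x)\setminus\mathrm{ran}(h))\cup\{h(x)\}$ -- still open because $\mathrm{ran}(h)\setminus\{h(x)\}$ is closed, and with unchanged union -- so that $h(x)$ is covered only by $U_0(x)$; the cover of the bounded remainder is likewise shrunk to avoid $\bigcup\{F_{\alpha_\zeta}:\zeta<\xi\}$. Without the injection, the $<\mu$ bound on the length of the induction, and the shrinking trick, your construction cannot be completed as stated.
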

\begin{proof}Suppose that $\mathcal{U}$ is an open cover of $F$. We can suppose that we refined it to the form $\{U(x):x\in F\}$ where each $U(x)$ is basic open. From Claim \ref{tech} we know that $Y_F,\Gamma_F\neq\emptyset$. We define $Y^\xi\subseteq F$ by induction.
  \begin{itemize}
  \item Let $\alpha_0\in\Gamma_F$ and $Y^0=\{x\in Y_F: F_{\alpha_0}\subseteq U(x)\}$. Fix some $h^0:Y^0\rightarrow F_{\alpha_0}$ injection; this exists because $|F_{\alpha_0}|=|F|\geq |Y_F|\geq|Y^0|$.
  \item Suppose we defined $\alpha_\zeta<\lambda$ and $Y^\zeta$ for $\zeta<\xi$. Let $$F^\xi=F\setminus\bigl(\bigcup\bigl\{U(x):x\in\cup\{Y^\zeta:\zeta<\xi\}\bigr\}\cup X_{\leq\alpha}\bigr)$$ where $\alpha=\sup \{\alpha_\zeta:\zeta<\xi\}$.
      \item If $F^\xi$ is bounded then stop. Notice that $F_\xi$ is bounded iff $F\setminus\bigcup\bigl\{U(x):x\in\cup\{Y^\zeta:\zeta<\xi\}\bigr\}$ is bounded.
      \item Suppose $F^\xi$ is unbounded. $F^\xi\subseteq F$ is closed either thus $F^\xi$ is high enough since $F$ is high. Hence $Y_{F^\xi},\Gamma_{F^\xi}
          \neq\emptyset$.
     \item Let $\alpha_\xi\in\Gamma_{F^\xi}$; thus $|F^\xi_{\alpha_\xi}|=|F^\xi|$ and $F^\xi_{\alpha_\xi}$ is covered by some $U(x)$ for $x\in F^\xi$. Let $Y^\xi=\{x\in Y_{F^\xi}: F^\xi_{\alpha_\xi}\subseteq U(x)\}$. Fix some $h^\xi:Y^\xi\rightarrow F^\xi_{\alpha_\xi}$ injection; this exists because $|F^\xi_{\alpha_\xi}|=|F^\xi|\geq |Y_{F^\xi}|\geq|Y^\xi|$.

\end{itemize}
\begin{lemma}The induction stops before $\mu$ many steps.
\end{lemma}
\begin{proof} Suppose we defined this way $\{\alpha_{\xi}:\xi<\mu\}$ and let $\alpha=\sup\{\alpha_{\xi}:\xi<\mu\}\in\slm$. Let $D=\bigcup\{F_{\alpha_\xi}:\xi<\mu\}$. By Claim \ref{sorkomp} there is some $x\in D'\cap X_\alpha$, thus $x\in F$ either. Clearly $F_{\alpha_\xi}\subseteq U(x)$ for $\mu$ many $\xi<\mu$. By the definition of the induction
 \begin{enumerate}[$(*)$]
 \item for every $\zeta<\xi<\mu$ and every $y\in Y^{\zeta}$: $F^\xi_{\alpha_\xi}\cap U(y)=\emptyset$
\end{enumerate}
Clearly by $(*)$, $x\notin Y^\zeta$ for all $\zeta<\mu$ since there is $\zeta<\xi<\mu$ such that $F^\xi_{\alpha_\xi}\subseteq U(x)$. Moreover $x\notin U(y)$ for every $y\in Y^\zeta$ and $\zeta<\mu$; if $x\in U(y)$ then since $x\neq y$ there is some $\beta<\alpha$ such that $\bigcup\{X_\gamma:\beta<\gamma\leq\alpha\}\subseteq U(y)$. This contradicts $(*)$ since there is $\zeta<\xi<\mu$ such that $\beta<\alpha_\xi$, thus $F^\xi_{\alpha_\xi}\subseteq U(y)$. Thus $x\in F^\xi$ for all $\xi<\mu$. Then $x\in Y^\xi$ for all $\xi<\mu$ such that $F_{\alpha_\xi}\subseteq U(x)$. This is a contradiction.
\end{proof}
Thus let us suppose that the induction stopped at step $\xi< \mu$, meaning that $\widetilde{F}=F\setminus\bigcup\{U(x):x\in Y\}$ is bounded where $Y=\cup\{Y^\zeta:\zeta<\xi\}$. Let $h=\bigcup\{h^\zeta:\zeta<\xi\}$, $h:Y\rightarrow F$ is a 1-1 function since the sets $\text{dom}(h^\zeta)=Y^\zeta$ and $ran(h^\zeta)\subseteq F^\zeta_{\alpha_\zeta}$ are pairwise disjoint for $\zeta<\xi$. Note that $\text{ran}(h)\subseteq \bigcup\{F_{\alpha_\zeta}:\zeta<\xi\}$ is closed discrete by Claim \ref{basic}.  For $x\in Y$ let $$U_0(x)=(U(x)\setminus \text{ran}(h))\cup\{h(x)\},$$ note that $U_0(x)$ is open. Then $$\bigcup\{U_0(x):x\in Y\}=\bigcup\{U(x):x\in Y\}$$
is a minimal open refinement, since $h(x)$ is only covered by $U_0(x)$ for all $x\in Y$. Let $\mathcal{U}_0=\{U_0(x):x\in Y\}$

 Let $V(x)=U(x)\setminus \bigcup\{F_{\alpha_\zeta}:\zeta<\xi\}$. Then $\mathcal{V}=\{V(x):x\in\widetilde{F}\}$ is an open cover of $\widetilde{F}$, refining $\mathcal{U}$; $F_{\alpha_\zeta}\cap \widetilde{F}=\emptyset$ by construction for all $\zeta<\xi$. $\widetilde{F}$ is closed and bounded thus irreducible by Claim \ref{korld}, hence there is an irreducible open refinement $\mathcal{V}_0$ of $\mathcal{V}$. It is straightforward that $\mathcal{V}_0\cup \mathcal{U}_0$ is a minimal open refinement of $\mathcal{U}$ covering $F$.
\end{proof}

\begin{cor}\label{fokov} Suppose that $\lambda>\mu=cf(\mu)$ are infinite cardinals such that $cf(\lambda)\geq \mu$. Let $\mad=\{M^\varphi:\varphi<\kappa\}\subseteq[\mu]^\mu$ be a MAD family and $\underline{C}$ an $\slm$-club sequence. If $\ter$ is high then $\ter$ is a 0-dimensional, Hausdorff, scattered space which is aD however not linearly D.
\end{cor}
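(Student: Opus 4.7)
The plan is to assemble the conclusion from the three main structural results already established. First, Claim \ref{basic} directly yields that $\ter$ is 0-dimensional, Hausdorff, and scattered, independently of the ``high'' hypothesis. Second, Corollary \ref{nlind} gives that $\ter$ is not linearly D, using only the assumption $cf(\lambda)\geq\mu$. So the real content of the corollary is to verify property aD.

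To prove aD I would invoke Arhangel'skii's characterization from Section \ref{defsec}: it suffices to check that every closed subspace $F\subseteq X$ is irreducible. I split into two cases depending on whether $\pi(F)$ is bounded in $\lambda$. In the bounded case, Claim \ref{korld} directly gives that $F$ is irreducible. In the unbounded case, Theorem \ref{fotetel} delivers irreducibility provided $F$ is high, so it remains to establish this property for every closed unbounded subspace.

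The one small verification is that every closed unbounded $F\subseteq X$ is itself high, under the standing hypothesis that $X$ is high. Suppose $G\subseteq F$ is closed in $F$ and unbounded (that is, $\pi(G)$ is unbounded in $\lambda$). Since $F$ is closed in $X$, $G$ is closed in $X$ as well, and unboundedness is a property of $\pi(G)$ alone. Hence $G$ is a closed unbounded subset of $X$, so by the assumption that $X$ is high, $G$ is high enough. Thus every closed unbounded subset of $F$ is high enough, which is exactly the assertion that $F$ is high, and Theorem \ref{fotetel} applies.

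This is a clean gluing step; all the difficulty was packed into Claim \ref{korld}, Theorem \ref{fotetel}, and Corollary \ref{nlind}, so I do not anticipate any further obstacle beyond the trivial hereditary observation above. The corollary really serves to package the ingredients into a single statement whose hypothesis (``$\ter$ is high'') can then be attacked by the cardinal-arithmetic and club-guessing constructions of Section \ref{examp}.
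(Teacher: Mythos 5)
Your proposal is correct and follows exactly the paper's argument: Claim \ref{basic} for the separation properties, Corollary \ref{nlind} for non-linear-D, Arhangel'skii's characterization reducing aD to irreducibility of closed subspaces, Claim \ref{korld} for the bounded case and Theorem \ref{fotetel} for the unbounded case. The only difference is that you spell out the (immediate) hereditariness of ``high'' for closed subspaces, which the paper states without comment.
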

\begin{proof}
$\ter$ is 0-dimensional, Hausdorff and scattered by Claim \ref{basic} and not linearly D by Corollary \ref{nlind}. It suffices to show that every closed $F\subseteq X$ is irreducible. If $F$ is bounded then $F$ is a D-space by Claim \ref{korld} hence irreducible. If $F$ is unbounded, then $F$ is high since $X$ is high. Hence $F$ is irreducible by Theorem \ref{fotetel}.
\end{proof}

\section{Examples of \lowercase{a}D, non linearly D-spaces}\label{examp}

In this section we give examples of aD, non linearly D-spaces of the form $X=\ter$. First let us make an observation.

\begin{clm}\label{toszlop} If $C_\alpha\subseteq \pi(F)'$ for a closed $F\subseteq X$ and $\alpha\in\slm$, then $F_\alpha=X_\alpha$.
\end{clm}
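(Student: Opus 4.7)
The plan is to show that every point $(\alpha,\varphi)\in X_\alpha$ lies in $F'$, so since $F$ is closed, $X_\alpha\subseteq F$; the reverse inclusion $F_\alpha\subseteq X_\alpha$ is immediate. Because every basic neighborhood of $(\alpha,\varphi)$ has the form $U((\alpha,\varphi),\eta)$ with $\eta<\mu$, it suffices to exhibit, for each $\eta<\mu$, an element of $F$ inside $U((\alpha,\varphi),\eta)$.

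The key observation is that the hypothesis $C_\alpha\subseteq \pi(F)'$ forces $\pi(F)$ to meet every interval $I_\alpha^\xi$ for $\xi<\mu$. Indeed, $a_\alpha^{\xi+1}\in C_\alpha$ is an accumulation point of $\pi(F)$, so there are elements of $\pi(F)$ arbitrarily close to $a_\alpha^{\xi+1}$ from below; choosing one in the open interval $(a_\alpha^\xi,a_\alpha^{\xi+1})$ yields a point of $\pi(F)$ lying in $I_\alpha^\xi$ (both for the successor/zero case where $I_\alpha^\xi=(a_\alpha^\xi,a_\alpha^{\xi+1}]$ and the limit case where $I_\alpha^\xi=[a_\alpha^\xi,a_\alpha^{\xi+1}]$).

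Now fix any $\varphi<\kappa$ and $\eta<\mu$. Since $|M^\varphi|=\mu$, we may pick some $\xi\in M^\varphi\setminus\eta$, and then by the previous paragraph choose $\gamma\in I_\alpha^\xi\cap \pi(F)$. By definition of $\pi$ we have $X_\gamma\cap F\neq\emptyset$, and by definition of the basic neighborhood $X_\gamma\subseteq U((\alpha,\varphi),\eta)$. Hence $U((\alpha,\varphi),\eta)\cap F\neq\emptyset$ for every $\eta<\mu$, which shows $(\alpha,\varphi)\in\overline{F}=F$. As $\varphi<\kappa$ was arbitrary, $X_\alpha\subseteq F_\alpha$, and so $F_\alpha=X_\alpha$.

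There is no real obstacle here — the claim is essentially a direct unwinding of the definition of the neighborhood base at points of $X_\alpha$, using only that $C_\alpha$ consists of accumulation points of $\pi(F)$ and that $M^\varphi$ is cofinal in $\mu$. The only mild care needed is in the case analysis for $I_\alpha^\xi$ at limit $\xi$, which is handled uniformly by picking the witness strictly inside $(a_\alpha^\xi,a_\alpha^{\xi+1})$.
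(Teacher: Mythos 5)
Your proof is correct and follows the same route as the paper: the paper's (one-line) argument likewise observes that $C_\alpha\subseteq\pi(F)'$ forces $F$ to meet $\bigcup\{X_\gamma:\gamma\in I_\alpha^\xi\}$ for every $\xi<\mu$, whence every point of $X_\alpha$ is an accumulation point of $F$ and closedness gives $F_\alpha=X_\alpha$. You have merely spelled out the details (the witness in $(a_\alpha^\xi,a_\alpha^{\xi+1})$ and the cofinality of $M^\varphi$ in $\mu$) that the paper leaves as ``clearly.''
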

\begin{proof}Clearly $\bigcup\{X_\gamma:\gamma\in I_\alpha^\xi\}\cap F\neq\emptyset$ for all $\xi<\mu$. Thus every point in $X_\alpha$ is an accumulation point of $F$, thus $F_\alpha=X_\alpha$ since $F$ is closed.
\end{proof}

Corollaries \ref{pelda1} and \ref{pelda2} below give certain examples of high $\ter$ spaces.

\begin{prop}
 Suppose that $\mu$ is a regular cardinal, $cf(\lambda)\geq\mu^{++}$. Let $\underline{C}$ be an $\slm$-club guessing sequence from Theorem \ref{shelah1}. If $\mad\subseteq [\mu]^\mu$ is a MAD family of size at least $\lambda$ then $\ter$ is high.
\end{prop}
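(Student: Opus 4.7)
The plan is to fix a closed unbounded $F\subseteq\ter$ and produce $\mu$ many $\alpha<\lambda$ with $|F_\alpha|=|F|$. First I would pin down the size of $F$. The hypothesis $|\mad|\geq\lambda$ gives $\kappa\geq\lambda$, and since $X$ is a union of $\lambda$ many fibers, each of size at most $\kappa$, we get $|X|=\kappa$ and in particular $|F|\leq\kappa$.

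Next I would extract a club of ``good'' ordinals from $\pi(F)$. Since $F$ is unbounded, $\pi(F)$ is unbounded in $\lambda$; since $cf(\lambda)\geq\mu^{++}>\omega$, the derived set $E:=\pi(F)'$ (limit points of $\pi(F)$ below $\lambda$) is closed and unbounded in $\lambda$. Now I would apply Theorem \ref{shelah1}: the sequence $\underline{C}$ guesses $E$, so $S:=\{\delta\in\slm:C_\delta\subseteq E\}$ is stationary in $\lambda$, and in particular $|S|\geq cf(\lambda)\geq\mu^{++}>\mu$.

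For each $\delta\in S$, Claim \ref{toszlop} immediately yields $F_\delta=X_\delta$, so $|F_\delta|=\kappa$. Comparing with the opening estimate forces $|F|=\kappa$, and hence $S\subseteq\{\alpha<\lambda:|F_\alpha|=|F|\}$ witnesses that $F$ is high enough. Since $F$ was an arbitrary closed unbounded subset of $\ter$, the space $\ter$ is high.

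I do not expect any real obstacle: the argument is essentially one application of club guessing feeding directly into Claim \ref{toszlop}. The only point worth double-checking is the opening size estimate, which is precisely where the hypothesis $|\mad|\geq\lambda$ is used; without it one could imagine $|F|$ exceeding $\kappa=|X_\delta|$, in which case the large fibers produced by Claim \ref{toszlop} would not match $|F|$ and the "high enough'' conclusion could fail.
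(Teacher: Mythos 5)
Your proposal is correct and follows essentially the same route as the paper: take the club $\pi(F)'$, use Theorem \ref{shelah1} to get stationarily many $\delta$ with $C_\delta\subseteq\pi(F)'$, and apply Claim \ref{toszlop} to conclude $F_\delta=X_\delta$. The extra care you take with the cardinality bookkeeping ($|X|=\kappa$, hence $|F_\delta|=|F|$) is exactly what the paper's ``$|F_\alpha|=|\mad|=|X|$'' compresses into one line.
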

\begin{proof}
 Let $F\subseteq X$ closed, unbounded. Then $\pi(F)'$ is a club in $\lambda$, hence there exists a stationary $S\subseteq \slm$ such that $C_\alpha\subseteq \pi(F)'$ for all $\alpha\in S$. Thus $F_\alpha=X_\alpha$ by Claim \ref{toszlop} hence $|F_\alpha|=|\mad|=|X|$ for all $\alpha\in S$.
\end{proof}

\begin{cor}\label{pelda1}
 \begin{enumerate}
  \item Suppose that $2^\omega\geq\omega_2$. Let $\mad$ be a MAD family on $\omega$ of size $2^\omega$ and let $\underline{C}$ be an $S^{\omega_2}_\omega$-club guessing sequence from Theorem \ref{shelah1}. Then $X[\omega_2,\omega,\mad,\underline{C}]$ is high.
\item Suppose that $2^\omega=\omega_1$ and $2^{\omega_1}\geq \omega_3$.  Let $\mad$ be a MAD family on $\omega_1$ of size $2^{\omega_1}$ (exists by Claim \ref{madsize}) and let $\underline{C}$ be an $S^{\omega_3}_{\omega_1}$-club guessing sequence from Theorem \ref{shelah1}. Then $X[\omega_3,\omega_1,\mad,\underline{C}]$ is high.
 \end{enumerate}
\end{cor}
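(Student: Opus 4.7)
The plan is to deduce both parts of the corollary directly from the preceding Proposition, whose hypotheses reduce (for our purposes) to the three conditions $\mu = \text{cf}(\mu)$, $\text{cf}(\lambda) \geq \mu^{++}$, $|\mad| \geq \lambda$, together with the existence of the club-guessing sequence $\underline{C}$ given by Theorem \ref{shelah1}. So the work is simply to check these conditions with the specific cardinal choices, and essentially nothing else is needed.

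For part (1), I would set $\mu = \omega$ and $\lambda = \omega_2$. Then $\mu$ is regular, and $\text{cf}(\lambda) = \omega_2 = \mu^{++}$, so Theorem \ref{shelah1} applies and produces the desired $S^{\omega_2}_\omega$-club guessing sequence $\underline{C}$. Since any MAD family on $\omega$ of size $2^\omega$ (which exists in ZFC) satisfies $|\mad| = 2^\omega \geq \omega_2 = \lambda$ under the assumption $2^\omega \geq \omega_2$, the size hypothesis is met. The Proposition then gives that $X[\omega_2,\omega,\mad,\underline{C}]$ is high.

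For part (2), I would set $\mu = \omega_1$ and $\lambda = \omega_3$. Then $\mu$ is regular and uncountable, and $\text{cf}(\lambda) = \omega_3 = \mu^{++}$, so again Theorem \ref{shelah1} yields an $S^{\omega_3}_{\omega_1}$-club guessing sequence. The assumption $2^\omega = \omega_1$ feeds into Claim \ref{madsize} to produce a MAD family on $\omega_1$ of size $2^{\omg}$, which by the additional hypothesis $2^{\omg} \geq \omega_3$ has cardinality at least $\lambda$. Applying the Proposition again yields that $X[\omega_3,\omega_1,\mad,\underline{C}]$ is high.

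There is no real obstacle here; the content is entirely in the Proposition and the cited theorems on club guessing and MAD families. The only thing worth double-checking is the precise form of Claim \ref{madsize}: under $2^\omega = \omega_1$ it yields a MAD family on $\omg$ of size $2^{\omg}$, which is exactly what is needed to make the size hypothesis $|\mad| \geq \lambda$ hold in part (2).
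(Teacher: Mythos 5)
Your proposal is correct and matches the paper's intent exactly: the corollary is stated as an immediate instantiation of the preceding Proposition, and your verification of the hypotheses ($\mathrm{cf}(\omega_2)=\omega^{++}$, $\mathrm{cf}(\omega_3)=\omega_1^{++}$, and $|\mad|\geq\lambda$ in each case, with Claim \ref{madsize} supplying the MAD family in part (2)) is precisely the routine check the paper leaves to the reader.
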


\begin{prop}
 Suppose that $\lambda=\mu^+ >\mu=cf(\mu)>\omega$ and let $\underline{C}$ be an $S^{\mu^+}_\mu$-club guessing sequence from Theorem \ref{shelah2}. If there is a nonstationary MAD family $\mad_{NS}\subseteq [\mu]^\mu$ such that $|\mad_{NS}|=\mu^+$ then $X=X[\mu^+,\mu,\mad_{NS},\underline{C}]$ is high.
\end{prop}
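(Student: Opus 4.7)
The plan is to use Theorem \ref{shelah2} (Shelah's refined $\slm$-club guessing on $\mu^+$) together with Claim \ref{madmetsz} to show: for every closed unbounded $F\subseteq X$, stationarily many $\delta\in\slm$ already satisfy $|F_\delta|=\mu^+=|F|$. Note that $|X|=\mu^+$, so once $F$ is unbounded in $\lambda=\mu^+$ one automatically has $|F|=\mu^+$, and ``high enough'' just asks for $\mu$ many $\delta$ with $|F_\delta|=\mu^+$.

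Let $F\subseteq X$ be closed unbounded and put $E=\pi(F)'$, a club in $\mu^+$. By Theorem \ref{shelah2} there are stationarily many $\delta\in\slm$ with $T_\delta:=\{\zeta<\mu:\alpha^\delta_{\zeta+1}\in E\}$ stationary in $\mu$. Fix such a $\delta$. For each $\zeta\in T_\delta$, the ordinal $\alpha^\delta_{\zeta+1}$ is a limit (being an accumulation point of ordinals) with $\pi(F)$ cofinal in it, so one can pick a point of $\pi(F)$ strictly between $a^\delta_\zeta$ and $a^\delta_{\zeta+1}=\alpha^\delta_{\zeta+1}$; hence $\zeta\in N_\delta:=\{\xi<\mu:I^\delta_\xi\cap\pi(F)\neq\emptyset\}$. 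Thus $N_\delta\supseteq T_\delta$ is stationary in $\mu$.

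Now I feed $N_\delta$ into Claim \ref{madmetsz}. Because $\mu>\omega$ is regular, the nonstationary ideal on $\mu$ is $\mu$-complete, so for any $\mad'\in[\mad_{NS}]^{<\mu}$ the union $\cup\mad'$ is nonstationary; as $N_\delta$ is stationary, $N_\delta\setminus\cup\mad'$ remains stationary and hence has cardinality $\mu$. Claim \ref{madmetsz} then yields $|\Phi|>\mu$, i.e.\ $|\Phi|=\mu^+$, where $\Phi=\{\varphi<\kappa:|N_\delta\cap M^\varphi|=\mu\}$. For every $\varphi\in\Phi$ and every $\eta<\mu$ one can pick $\xi\in(N_\delta\cap M^\varphi)\setminus\eta$, so that $U((\delta,\varphi),\eta)$ contains some $X_\gamma$ with $\gamma\in I^\delta_\xi\cap\pi(F)$ and thus meets $F$; by closedness $(\delta,\varphi)\in F$. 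Consequently $|F_\delta|\geq|\Phi|=\mu^+=|F|$.

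Stationarily many $\delta\in\slm\subseteq\mu^+$ have this property, certainly at least $\mu$ many, so $F$ is high enough. The main technical friction is the meshing of the two combinatorial tools: Theorem \ref{shelah2} produces stationarity of $\{\zeta:\alpha^\delta_{\zeta+1}\in E\}$ \emph{inside} $\mu$, and this is exactly the input Claim \ref{madmetsz} needs, provided $\cup\mad'$ for $\mad'\in[\mad_{NS}]^{<\mu}$ fails to cover $N_\delta$ on a stationary set --- which is precisely where the nonstationarity of every $M\in\mad_{NS}$ and the $\mu$-completeness of $NS_\mu$ (requiring $\mu>\omega$) are used.
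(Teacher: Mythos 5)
Your proof is correct and follows essentially the same route as the paper: guess $\pi(F)'$ via Theorem \ref{shelah2}, deduce that $N_\delta=\{\xi<\mu:I^\delta_\xi\cap\pi(F)\neq\emptyset\}$ is stationary, feed it into Claim \ref{madmetsz} to get $\mu^+$ many $\varphi$ with $(\delta,\varphi)\in F$. The only difference is that you spell out the verification of the hypothesis of Claim \ref{madmetsz} (nonstationarity of the $M\in\mad_{NS}$ plus $\mu$-completeness of the nonstationary ideal, hence $\mu>\omega$), which the paper leaves implicit; that is exactly the intended role of the nonstationary MAD family.
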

\begin{proof}
 Let $\mad_{NS}=\{M^\varphi:\varphi<\mu^+\}$ and $\underline{C}=\langle C_\alpha:\alpha\in  S^{\mu^+}_\mu \rangle$ such that $C_\alpha=\{a_\alpha^\xi:\xi<\mu\}\subseteq \alpha$. Suppose that the closed $F\subseteq X$ is unbounded. Then  $\pi(F)'$ is a club in $\mu^+$, hence there exists a stationary $S\subseteq S^{\mu^+}_\mu$ such that $$N_\alpha=\{\xi<\mu: a_\alpha^{\xi+1}\in\pi(F)'\} \text{ is stationary in $\mu$}$$
 for all $\alpha\in S$. Fix any $\alpha\in S$, we prove that $|F_\alpha|=|F|$. $N_\alpha$ is stationary so by applying Claim \ref{madmetsz} we get that $|\Phi_\alpha|=\mu^+$ for $\Phi_\alpha=\{\varphi<\mu^+:|N_\alpha\cap M^\varphi|=\mu\}$. Note that $F\cap \bigcup\{X_\gamma:\gamma\in I_\alpha^\xi\}\neq\emptyset$ for $\xi\in N_\alpha$. Thus $(\alpha,\varphi)$ is an accumulation point of $F$ for $\varphi\in\Phi_\alpha$, hence $\{\alpha\}\times \Phi_\alpha\subseteq F_\alpha$. Thus $|F_\alpha|=\mu^+=|X|$.
\end{proof}

\begin{cor}\label{pelda2}
 Suppose that $2^{\omega_1}=\omega_2$. Let $\underline{C}$ be an $S^{\omega_2}_{\omega_1}$-club guessing sequence from Theorem \ref{shelah2} and let $\mad_{NS}$ be a nonstationary MAD family on $\omega_1$. Then $X[\omega_2,\omega_1,\mad_{NS},\underline{C}]$ is high.
\end{cor}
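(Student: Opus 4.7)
The plan is to apply the preceding Proposition in the specialization $\mu = \omega_1$, $\lambda = \mu^+ = \omega_2$, since the statement of Corollary \ref{pelda2} is exactly this particular case. The cardinal and cofinality hypotheses $\lambda = \mu^+ > \mu = \mathrm{cf}(\mu) > \omega$ are automatic from the choice of parameters, and the sequence $\underline{C}$ is fixed to be precisely the one furnished by Theorem \ref{shelah2}. Hence the only thing I need to check is that the nonstationary MAD family $\mad_{NS} \subseteq [\omega_1]^{\omega_1}$ can be taken of cardinality exactly $\mu^+ = \omega_2$, which is the size hypothesis required by the preceding Proposition.

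For this I would use the assumption $2^{\omega_1} = \omega_2$ as follows. The upper bound $|\mad_{NS}| \leq 2^{\omega_1} = \omega_2$ is immediate. For the matching lower bound, I would first exhibit an almost disjoint family $\{A_\alpha : \alpha < \omega_2\}$ of nonstationary subsets of $\omega_1$ of size $\omega_1$; the standard way to produce such a family is to fix a single nonstationary $T \subseteq \omega_1$ of cardinality $\omega_1$ (for example, the set of successor ordinals), identify $T$ with $\omega_1$, and carry out a tree-type construction giving $2^{\omega_1} = \omega_2$ many pairwise almost disjoint subsets of $T$. Then Zorn's lemma applied inside the class of almost disjoint families of nonstationary subsets of $\omega_1$ extends this initial family to a nonstationary MAD family $\mad_{NS}$ with $|\mad_{NS}| = \omega_2$, as claimed.

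The one point that deserves emphasis is that the existence of the initial almost disjoint family of size $\omega_2$ is the place where the cardinal arithmetic hypothesis $2^{\omega_1} = \omega_2$ is really used: without such an assumption, by the Baumgartner consistency result cited in Section \ref{madsec}, large almost disjoint families on $\omega_1$ need not exist, so this is genuinely the main obstacle in the argument. Once a nonstationary MAD family of size $\omega_2$ is secured, plugging $\mu = \omega_1$, $\lambda = \omega_2$, $\mad_{NS}$ and $\underline{C}$ directly into the preceding Proposition gives that $X[\omega_2, \omega_1, \mad_{NS}, \underline{C}]$ is high, which is the conclusion of Corollary \ref{pelda2}.
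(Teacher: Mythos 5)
Your overall strategy --- reduce to the preceding Proposition with $\mu=\omega_1$, $\lambda=\omega_2$ and verify the size hypothesis $|\mad_{NS}|=\mu^+=\omega_2$ --- is the right one (the paper leaves the corollary without proof, and this is clearly the intended reduction). But the step where you produce the family of size $\omega_2$ has a genuine flaw. The tree-type construction yields $2^{\omega_1}$ almost disjoint branches of the tree $2^{<\omega_1}$, and identifying that tree with $\omega_1$ requires $2^{<\omega_1}=\omega_1$, i.e.\ CH. The hypothesis of the corollary is only $2^{\omega_1}=\omega_2$, which is consistent with $2^\omega=\omega_2$; in that case $|2^{<\omega_1}|=\omega_2$ and your construction produces almost disjoint subsets of a set of size $\omega_2$, not of $\omega_1$ (or of $T$). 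So the ``main obstacle'' you isolate is not actually overcome by the stated hypothesis. The repair is easy but different: on any regular $\mu$ one can build, by transfinite diagonalization (extending any almost disjoint family of size $<\mu^+$ that contains a partition of $\mu$ into $\mu$ pieces), an almost disjoint family of size $\mu^+$ in ZFC; carrying this out inside the nonstationary set $T$ of successor ordinals gives $\omega_2=\omega_1^+$ many almost disjoint nonstationary sets, and Zorn then gives a nonstationary MAD family of size exactly $\omega_2$ (the upper bound $2^{\omega_1}=\omega_2$ is where the cardinal arithmetic is used).

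There is also a quantifier mismatch you should address: the corollary asserts the conclusion for an \emph{arbitrary} nonstationary MAD family $\mad_{NS}$ on $\omega_1$, whereas your argument only manufactures one particular family of the right size. The cleaner route, which also fixes this, is to observe that \emph{every} nonstationary MAD family on $\omega_1$ automatically has size $>\omega_1$: a union of countably many nonstationary subsets of $\omega_1$ is nonstationary, so $N=\omega_1$ satisfies the hypothesis of Claim \ref{madmetsz}, whose conclusion forces $|\mad_{NS}|>\omega_1$ (equivalently, if $|\mad_{NS}|\leq\omega_1$ the diagonal intersection of clubs avoiding the members of $\mad_{NS}$ would be an uncountable set almost disjoint from all of them, contradicting maximality). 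Combined with $|\mad_{NS}|\leq 2^{\omega_1}=\omega_2$ this gives $|\mad_{NS}|=\omega_2$ for every such family, and the Proposition applies as you intended.
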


Thus, by all means we can deduce the proof of Theorem \ref{main}.

\begin{proof}[Proof of Theorem \ref{main}] Note that in any model of ZFC, either $(2^\omega\geq\omega_2)$ or $(2^\omega=\omg\wedge 2^{\omg}\geq\omega_3)$ or $(2^{\omg}=\omega_2)$. Using Corollaries \ref{pelda1} and \ref{pelda2} above, depending on the sizes of $2^\omega$ and $2^{\omg}$, we see that there exists a high $\ter$ space. We are done by Corollary \ref{fokov}.
\end{proof}

\end{document}